\title[Some new inequalities...]{Some new inequalities for the $q$-gamma and related functions}
 \theoremstyle{definition}
  \theoremstyle{plain}
  \newtheorem{lemma}      {Lemma}
  \newtheorem{theorem}    {Theorem}
  \newtheorem{corollary}  {Corollary}
  \theoremstyle{remark}
  \newtheorem{rmk}{{\bf Remark}}
  \newcommand{\fr}{\frac}
\begin{document}
  \author[M. El Bachraoui and J. S\'{a}ndor]{Mohamed El Bachraoui and J\'{o}zsef S\'{a}ndor}
  \address{Dept. Math. Sci,
 United Arab Emirates University, PO Box 15551, Al-Ain, UAE}
 \email{melbachraoui@uaeu.ac.ae}
 \address{Babes-Bolyai University, Department of Mathematics and Computer Science, 400084 Cluj-Napoca, Romania}
 \email{jsandor@math.ubbcluj.ro}
 \keywords{$q$-gamma function; $q$-digamma function; inequalities; convexity; monotonicity; special functions.}
\subjclass{33B15; 26D15; 33E05}
 \begin{abstract}
We consider convexity and monotonicity properties for some functions related
to the $q$-gamma function. As applications,
we give a variety of inequalities for the $q$-gamma function, the $q$-digamma function $\psi_q(x)$,
and the $q$-series. Among other consequences, we improve a result of Azler~and~Grinshpan about the zeros of the function $\psi_q(x)$.
 We use $q$-analogues for the
Gauss multiplication formula to put in closed form members of some of our inequalities.
 \end{abstract}
  \date{\textit{\today}}
  \maketitle
\section{Introduction}\label{sec-1}
Throughout this paper we assume that $0<q<1$.
The $q$-shifted factorials of a complex number $a$ are defined by
\[
(a;q)_0= 1,\quad (a;q)_n = \prod_{i=0}^{n-1}(1-a q^i),\quad
(a;q)_{\infty} = \lim_{n\to\infty}(a;q)_n.
\]
%It easily verified that
%\begin{equation}\label{q-basics}
%(q^2;q^2)_{\infty} = (q;q)_{\infty} (-q;q)_{\infty},\quad
%(q;q)_{\infty} = (q^2;q^2)_{\infty} (q;q^2)_{\infty},
%\end{equation}
%\[
%(-q;q)_{\infty}  = \fr{1}{(q;q^2)_{\infty}}, \quad\text{and\quad }
%(-q;q)_{\infty} = (-q^2;q^2)_{\infty} (-q;q^2)_{\infty}.
%\]
%
For convenience we write
\[
(a_1,\ldots,a_k;q)_n = (a_1;q)_n\cdots (a_k;q)_n,\quad
(a_1,\ldots,a_k;q)_{\infty} = (a_1;q)_{\infty} \cdots (a_k;q)_{\infty}.
\]
For any complex $x$, we let
\[
[x]_q = \fr{1-q^x}{1-q},
\]
for which we have $\lim_{q\to 1} [x]_q = x$.
The $q$-gamma function is given by
\[
\Gamma_q(x) = \dfrac{(q;q)_\infty}{(q^{x};q)_\infty} (1-q)^{1-x} \quad (|q|<1)
\]
It is clear that
\begin{equation}\label{basic-fact}
\Gamma_q(x+1)= [x]_q \Gamma_q(x)
\end{equation}
and it is well-known that $\Gamma_q (x)$ is a $q$-analogue for the function $\Gamma (x)$, see Askey~\cite{Askey}.
The digamma function is $\psi(x) = \big(\log\Gamma(x)\big)' = \fr{\Gamma'(x)}{\Gamma(x)}$ and its $q$-analogue is the
$q$-digamma function given by
\[
\psi_q (x) = \big(\log\Gamma_q(x)\big)' = \fr{\Gamma_q '(x)}{\Gamma_q (x)}.
\]
%The $q$-binomial coefficient is defined for any complex numbers $a$ and $x$ as follows
%\[
%{x\brack a}_q = \fr{\Gamma_q(x+1)}{\Gamma_q(a+1)\Gamma_q(x-a+1)}
%\]
The $q$-binomial theorem states that
\begin{equation}\label{q-binomial}
\sum_{n=0}^{\infty}\fr{(a;q)_n}{(q;q)_n} x^n = \fr{(ax;q)_{\infty}}{(x;q)_{\infty}} =: {}_{1}\phi_{0} (a,-;q,x)\quad (|x|<1,\ |q|<1),
\end{equation}
where ${}_{1}\phi_{0} (a,-;q,x)$ is the basic hypergeometric series.
For details and historical notes on the $q$-series, the hypergeometric series, and related functions
we refer to \cite{Andrews-Askey-Roy}.
Our primary goal in this paper is to consider monotonicity and convexity properties of the
$q$-gamma function and some of its related functions.
As an application, we shall present inequalities involving the functions
$\Gamma_q(x)$ and $\psi_q(x)$ along with related functions including
the function ${}_{1}\phi_{0} (a,-;q,x)$. Some of our inequalities involve powers, ratios, and products of
these special functions.
A crucial tool to achieve some of our inequalities is Jensen's inequality stating that if $f(x)$ is a convex
function on $I$ then for all $x_1,\ldots,x_n \in I$ and all positive $a_1,\ldots, a_n$ one has
\[
f\Big(\fr{\sum_{i=1}^n a_i x_i}{\sum_{i=1}^n a_i}\Big) \leq \fr{\sum_{i=1}^n a_i f(x_i)}{\sum_{i=1}^n a_i}.
\]
We mention that refinements of Jensen's inequality exist in literature and thus any inequality we
prove in this paper using Jensen's inequality can appropriately be slightly improved.
For some of other refinements of Jensen's inequality, see
\cite{Pecaric-Svrtan}.
Azler~and~Grinshpan~\cite[Lemma 4.5]{Alzer-Grinshpan} proved that the function $\psi_q(x)$ for
$0<q \not=1$ has a uniquely determined positive zero $x_0 = x_0(q)$.
Among our applications, we shall show that $x_0(q) \in (1,2)$.
We will also provide Ky~Fan type inequalities for the $q$-gamma function.
Another purpose of our work is to establish a variety of inequalities involving the $q$-series.
We note that some of our formulas
have been put in closed forms thanks to $q$-analogues of the Gauss multiplication formula for
the gamma function which we shall describe now.
We recall that the Gaussian multiplication formula for gamma function states that
\begin{equation}\label{Gauss-prod}
\Gamma\left(\fr{1}{n}\right) \Gamma\left(\fr{2}{n}\right) \cdots \Gamma\left(\fr{n-1}{n}\right) =
 \fr{(2\pi)^{\fr{n-1}{2}}}{\sqrt{n}} \quad (n=1,2,\ldots).
\end{equation}
A famous $q$-analogue for (\ref{Gauss-prod}) due to Jackson~\cite{Jackson-1, Jackson-2},
 states that
 \begin{equation}\label{Jackson-q-Gamma}
 \left(\fr{1-q^n}{1-q}\right)^{nz-1} \Gamma_{q^n} (z)\Gamma_{q^n} \left(z+\fr{1}{n}\right)
 \cdots \Gamma_{q^n} \left(z+\fr{n-1}{n}\right)
 \end{equation}
 \[
 =
 \Gamma_{q^n} (nz)\Gamma_{q^n} \left(\fr{1}{n}\right) \Gamma_{q^n} \left(\fr{2}{n}\right)
 \cdots \Gamma_{q^n} \left(\fr{n-1}{n}\right) \quad (n=1,2,\ldots).
 \]
Recently, the authors~\cite{Bachraoui-Sandor} gave the following $q$-analogue for
(\ref{Gauss-prod})
\begin{equation}\label{q-Gauss-prod-1}
\prod_{k=1}^{n-1}\Gamma_q\left(\fr{k}{n}\right) =
\Big(\Gamma_q\big(\fr{1}{2}\big)\Big)^{n-1} \fr{(q^{\fr{1}{2}}; q^{\fr{1}{2}})_{\infty}^{n-1}}
{(q;q)_{\infty}^{n-2} (q^{\fr{1}{n}};q^{\fr{1}{n}})_{\infty}}.
\end{equation}
Besides, S\'{a}ndor~and~T\'{o}th~\cite{Sandor-Toth} found
 \begin{equation}\label{short-gam-prod}
 P(n):= \prod_{\substack{k=1 \\ (k,n)=1}}^n \Gamma\left(\fr{k}{n}\right)
 = \fr{(2\pi)^{\fr{\varphi(n)}{2}}}{ \prod_{d\mid n} d^{\fr{1}{2}\mu\left(\fr{n}{d}\right)} }
 = \fr{(2\pi)^{\fr{\varphi(n)}{2}}}{ e^{\fr{\Lambda(n)}{2}} },
 \end{equation}
 where $\varphi(n)$ in the Euler totient function, $\mu(n)$ is the M\"{o}bius mu function, and $\Lambda(n)$ is the Von~Mangoldt function.
 We accordingly let
 \[
 P_q(n) = \prod_{\substack{k=1 \\ (k,n)=1}}^n \Gamma_q\Big(\fr{k}{n}\Big).
 \]
The authors~\cite{Bachraoui-Sandor} also found the following $q$-analogue
(\ref{short-gam-prod}).
\begin{equation}\label{q-short-gam-prod}
P_q(n) = \fr{ \left(\Gamma_q\Big( \fr{1}{2} \Big)\right)^{\varphi(n)} (q^{\fr{1}{2}};q)^{\varphi(n)}}
{ \prod_{d\mid n}\big( \big(q^{\fr{1}{d}};q^{\fr{1}{d}}\big)_{\infty} \big)^{\mu\left(\fr{n}{d}\right)}} %(q^{\fr{1}{d}};q^{\fr{1}{d}})^{\mu\left(\fr{n}{d}\right)}}
= \fr{\left( 2\Gamma_q\Big(\fr{1}{2}\Big)\right)^{\fr{\varphi(n)}{2}}}{e^{\fr{\Lambda_q(n)}{2}}}.
\end{equation}
\section{Inequalities for $\psi_q(x)$ and $\Gamma_q(x)$}\label{sec-2}
\begin{lemma}\label{lem-1}
\emph{(a)}\ The derivative of the function $\psi_q(x)$ is strictly completely monotonic on $(0,\infty)$, that is,
\[
(-1)^n \big(\psi'(x) \big)^{(n)} >0  \quad x>0,\ n=0,1,2,\ldots.
\]
\noindent
\emph{(b)}\ For any $x\geq 1$, we have that $x \big(\psi_q(x) \big)' + 2 \psi_q(x) >0$.
\end{lemma}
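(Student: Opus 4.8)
The plan is to run both parts off the Lambert-type series for the $q$-digamma function. Starting from $\Gamma_q(x)=(1-q)^{1-x}(q;q)_\infty/(q^x;q)_\infty$ I would take logarithms and differentiate to get $\psi_q(x)=-\log(1-q)+\log q\sum_{n\ge 0}\frac{q^{x+n}}{1-q^{x+n}}$, then expand each fraction geometrically and sum over $n$ to obtain
\[
\psi_q(x)=-\log(1-q)+\log q\sum_{k=1}^{\infty}\frac{q^{kx}}{1-q^k}\qquad(x>0).
\]
Every series below (and its derivatives) converges locally uniformly on $(0,\infty)$, so term-by-term differentiation is legitimate; I will record this once and not repeat it.

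Part (a) is then immediate: differentiating the display $n$ times in $x$ gives $\big(\psi_q'(x)\big)^{(n)}=(\log q)^{\,n+2}\sum_{k\ge1}\frac{k^{n+1}q^{kx}}{1-q^k}$. Since $0<q<1$ we have $\log q<0$, hence $(-1)^n(\log q)^{\,n+2}=|\log q|^{\,n+2}>0$, while the remaining series has all terms strictly positive; therefore $(-1)^n\big(\psi_q'(x)\big)^{(n)}>0$ for all $x>0$ and $n\ge0$.

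For part (b) put $L=-\log q>0$ and $g(x)=x\psi_q'(x)+2\psi_q(x)$. From $\psi_q'(x)=L^2\sum_{k\ge1}\frac{kq^{kx}}{1-q^k}$ one gets
\[
g(x)=-2\log(1-q)+L\sum_{k=1}^{\infty}\frac{q^{kx}}{1-q^k}\,(xkL-2).
\]
The crucial move is to not keep the constant term isolated but to spread it along the series via $-2\log(1-q)=2\sum_{k\ge1}q^k/k$, which yields
\[
g(x)=\sum_{k=1}^{\infty}\left(\frac{2q^k}{k}-\frac{L\,q^{kx}}{1-q^k}\,(2-xkL)\right),
\]
and then to show each summand is positive when $x\ge1$. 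If $2-xkL\le0$ the $k$-th summand is visibly positive. If $2-xkL>0$, then $x\ge1$ gives $q^{kx}\le q^k$ and $0<2-xkL\le 2-kL$, so it suffices to verify $2(1-q^k)\ge kL(2-kL)$; substituting $s=kL$ (so $q^k=e^{-s}$) this becomes $\phi(s):=2-2e^{-s}-2s+s^2\ge0$, which holds since $\phi(0)=\phi'(0)=0$ and $\phi''(s)=2(1-e^{-s})\ge0$ on $[0,\infty)$. Hence $g(x)>0$ for all $x\ge1$.

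The only genuine difficulty is locating this bookkeeping. Every naive simplification of $x\psi_q'(x)+2\psi_q(x)$ — for instance $g(x)=\frac1x\frac{d}{dx}\big(x^2\psi_q(x)\big)=\frac{d^2}{dx^2}\big(x\log\Gamma_q(x)\big)$, or eliminating the sums through the functional equation $\psi_q(x+1)=\psi_q(x)-q^x\log q/(1-q^x)$ — just reproduces $g$; and the shortcut of splitting on the sign of $\psi_q$ (using the known positive zero $x_0$ of $\psi_q$) only disposes of $x\ge x_0$, leaving the sensitive band $1\le x<x_0$ where $\psi_q<0$. Pairing the Lambert term of index $k$ against $2q^k/k$ is exactly what reduces the estimate to the one-variable inequality $\phi(s)\ge0$, and the hypothesis $x\ge1$ enters in precisely two places (passing from $q^{kx}$ to $q^k$ and from $2-xkL$ to $2-kL$). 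In the write-up I would only need to add the short observation that both series being regrouped converge absolutely for $x\ge1$, so the termwise rearrangement is valid.
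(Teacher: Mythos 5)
Your proof is correct. For part (a) you do exactly what the paper does: rewrite $\psi_q$ as the Lambert-type series $-\log(1-q)+\log q\sum_{k\ge1}q^{kx}/(1-q^k)$ and differentiate term by term, reading off the sign from $(-1)^n(\log q)^{n+2}=|\log q|^{n+2}$. For part (b), however, the paper gives no argument at all --- it simply cites Alzer and Grinshpan \cite[Lemma 3.4]{Alzer-Grinshpan} --- whereas you supply a complete self-contained proof. Your key device, absorbing the constant $-2\log(1-q)=2\sum_{k\ge1}q^k/k$ into the Lambert series and showing each regrouped summand is nonnegative via the reduction (using $x\ge1$ to pass from $q^{kx}$ to $q^k$ and from $2-xkL$ to $2-kL$) to the one-variable inequality $2(1-e^{-s})\ge s(2-s)$, checks out: with $\phi(s)=2-2e^{-s}-2s+s^2$ one has $\phi(0)=\phi'(0)=0$ and $\phi''(s)=2(1-e^{-s})>0$ for $s>0$, so $\phi>0$ on $(0,\infty)$ and every summand is in fact strictly positive, giving $x\psi_q'(x)+2\psi_q(x)>0$. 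Your side remark is also apt that the cheap route via the sign of $\psi_q$ only handles $x\ge x_0$ and leaves the band $1\le x<x_0$ untouched, so a genuine estimate is needed there. What your approach buys is independence from the external reference at the cost of an elementary but nontrivial bookkeeping argument; what the paper's citation buys is brevity. Either is acceptable, though if you keep your version you should state (as you do) that the rearrangement of the two absolutely convergent series is legitimate for $x\ge1$.
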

\begin{proof}
Part (a) is an immediate consequence of the series representation
\[
\psi_q(x) = -\log(1-q)+(\log q)\sum_{n=0}^{\infty}\fr{q^{n+x}}{1-q^{n+x}}
= -\log(1-q)+(\log q)\sum_{n=1}^{\infty}\fr{q^{nx}}{1-q^{n}}.
\]
Part (b) is due Azler~and~Grinshpan~\cite[Lemma 3.4]{Alzer-Grinshpan}.
\end{proof}
\begin{lemma}\label{lem-2}
For all $x>0$, we have
\[
\fr{q^x \log q}{1-q^x} + \log[x]_q  < \psi_q (x) < \log[x]_q.
\]
\end{lemma}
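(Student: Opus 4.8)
The plan is to argue directly from the series representation established in the proof of Lemma~\ref{lem-1}(a), used in the form
\[
\psi_q(x) = -\log(1-q) + (\log q)\sum_{n=1}^{\infty}\frac{q^{nx}}{1-q^{n}},
\]
together with the identity $\log[x]_q = \log(1-q^x)-\log(1-q)$. Cancelling the common term $-\log(1-q)$ reduces both claimed inequalities to statements about $(\log q)\sum_{n\ge1}q^{nx}/(1-q^n)$. I would then also record the elementary power-series identities $\log(1-q^x) = -\sum_{n\ge1}q^{nx}/n$ and $q^x/(1-q^x) = \sum_{n\ge1}q^{nx}$, valid for $x>0$. Since $0<q<1$, every series in play is a sum of positive multiples of the powers $q^{nx}$, so the inequalities can be attacked coefficient by coefficient.

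For the right-hand inequality $\psi_q(x)<\log[x]_q$: after cancellation it becomes $(\log q)\sum_{n\ge1}q^{nx}/(1-q^n) < -\sum_{n\ge1}q^{nx}/n$, and it suffices to prove the coefficient-wise bound $(\log q)/(1-q^n) < -1/n$ for every $n\ge1$. Multiplying by $n(1-q^n)>0$ this is $n\log q < q^n-1$, i.e. $\log(q^n)<q^n-1$, which is the classical strict inequality $\log t<t-1$ (valid for all $t>0$, $t\neq1$) applied at $t=q^n\in(0,1)$. Strictness in each term yields strictness of the sum.

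For the left-hand inequality I would, after the same cancellation, use the two power-series identities above to rewrite the claim as $\sum_{n\ge1}q^{nx}\bigl(\log q-\tfrac{1}{n}\bigr) < (\log q)\sum_{n\ge1}q^{nx}/(1-q^n)$; again it is enough to compare the coefficients of $q^{nx}$, i.e. to show $\log q-\tfrac1n < (\log q)/(1-q^n)$. Multiplying through by $1-q^n>0$ and simplifying reduces this to $-nq^n\log q < 1-q^n$, equivalently $\log(q^{-n})<q^{-n}-1$, which is once more $\log t<t-1$, this time at $t=q^{-n}>1$. Summing the strict termwise inequalities finishes the proof.

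The argument is essentially mechanical once the series are in hand; the only genuine care required is sign bookkeeping, since $\log q<0$, so each multiplication by $\log q$ reverses an inequality, and one must track the orientation both before and after clearing the positive factor $1-q^n$. I do not expect a real obstacle: both inequalities, once put in the shape $\sum q^{nx}(\cdots)$ versus $\sum q^{nx}(\cdots)$, collapse to the single elementary fact $\log t<t-1$ for $t\neq1$, applied on the two regimes $t<1$ and $t>1$. One could instead try to integrate the monotonicity/convexity bounds on $\psi_q'$ from Lemma~\ref{lem-1}, or exploit the functional relation $\psi_q(x+1)-\psi_q(x)=-q^x\log q/(1-q^x)$, but the direct termwise comparison seems the shortest and cleanest route.
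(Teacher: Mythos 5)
Your proof is correct, but it follows a genuinely different route from the paper's. The paper works at the level of the functions themselves: it takes logarithms in $\Gamma_q(x+1)=[x]_q\Gamma_q(x)$, applies the Lagrange mean value theorem to produce a $t\in(0,1)$ with $\psi_q(x+t)=\log[x]_q$, sandwiches this between $\psi_q(x)$ and $\psi_q(x+1)$ using the strict monotonicity of $\psi_q$ from Lemma~\ref{lem-1}(a), and then converts the upper bound into the left-hand inequality via the differentiated recurrence $\psi_q(x)=\frac{q^x\log q}{1-q^x}+\psi_q(x+1)$ --- exactly the ``functional relation'' alternative you mention in passing at the end. Your argument instead expands everything in powers of $q^{x}$: the series for $\psi_q$, the logarithm series for $\log(1-q^x)$, and the geometric series for $q^x/(1-q^x)$, and reduces both inequalities to the termwise bound $\log t<t-1$ applied at $t=q^n$ and $t=q^{-n}$ respectively. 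I checked the coefficient comparisons ($n\log q<q^n-1$ for the upper bound, $-nq^n\log q<1-q^n$ for the lower) and the sign bookkeeping; both are right, and absolute convergence for $x>0$, $0<q<1$ justifies the termwise treatment. What the paper's route buys is brevity and reuse of structure already in place (monotonicity plus the recurrence); it also explains \emph{why} the two bounds appear, namely as $\psi_q$ evaluated at the endpoints of $[x,x+1]$. What your route buys is self-containedness and transparency of strictness: everything collapses to the single elementary inequality $\log t<t-1$, with no appeal to the mean value theorem.
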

\begin{proof}
From (\ref{basic-fact}), we deduce that $\log \Gamma_q (x+1) - \log \Gamma_q (x) = \log [x]_q$. Then by Lagrange mean value theorem, there exists $t\in(0,1)$
such that
\begin{equation}\label{help-1-lem-2}
\psi_q(x+t) = \log [x]_q.
\end{equation}
As $\psi_q (x)$ is strictly increasing by Lemma~\ref{lem-1}, the forgoing identity implies that
\begin{equation}\label{help-2-lem-2}
\psi_q(x) < \psi_q(x+t) < \psi_q(x+1).
\end{equation}
Next, differentiate both sides of (\ref{basic-fact}) to obtain
\begin{equation}\label{help-3-lem-2}
\psi_q(x) =  \fr{q^x \log q}{1-q^x} + \psi_q(x+1).
\end{equation}
Now, combine (\ref{help-1-lem-2}), (\ref{help-2-lem-2}), and (\ref{help-3-lem-2}) to get the desired inequalities.
\end{proof}
\noindent
Azler~and~Grinshpan~\cite[Lemma 4.5]{Alzer-Grinshpan} proved that the function $\psi_q(x)$ for $0<q \not=1$ has a uniquely determined positive zero $x_0 = x_0(q)$.
For  $0<q<1$, it turns out that $x_0(q) \in (0,1)$ as we will see now.
%We are now ready to state the main result of this section.
\begin{theorem}\label{thm-psi-q-zero}
(a)\ The function $\psi_q(x)$ has a unique zero $x_0$ in the interval $(1,2)$. \\
\noindent
(b)\ There holds $\Gamma_q(x)\geq \Gamma_q(x_0)$ for all $x\in (0,\infty)$.
\end{theorem}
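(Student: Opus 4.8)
The plan is to prove part (a) with a one-line application of Rolle's theorem rather than by estimating $\psi_q$ at the endpoints. The point is that $\Gamma_q$ takes equal values at $1$ and $2$: from the defining formula $\Gamma_q(1) = (q;q)_\infty/(q;q)_\infty = 1$, and then $\Gamma_q(2) = [1]_q \Gamma_q(1) = 1$ by \eqref{basic-fact} since $[1]_q = 1$. Since $\log\Gamma_q$ is differentiable on $[1,2]$ with $\log\Gamma_q(1) = \log\Gamma_q(2) = 0$, Rolle's theorem produces a point $x_0 \in (1,2)$ with $(\log\Gamma_q)'(x_0) = \psi_q(x_0) = 0$. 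By the Alzer--Grinshpan result quoted above, $\psi_q$ has exactly one positive zero, so this $x_0$ is it and $x_0(q) \in (1,2)$ as claimed. Equivalently, one may apply \eqref{help-1-lem-2} with $x = 1$, which directly gives $t \in (0,1)$ with $\psi_q(1+t) = \log[1]_q = 0$.

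If one prefers to avoid invoking the uniqueness part of Alzer--Grinshpan, the same conclusion follows from Lemmas~\ref{lem-1} and~\ref{lem-2}: the upper bound in Lemma~\ref{lem-2} at $x = 1$ gives $\psi_q(1) < \log[1]_q = 0$, while combining \eqref{help-3-lem-2} at $x=1$ with the lower bound in Lemma~\ref{lem-2} at $x=1$ gives
\[
\psi_q(2) = \psi_q(1) + \fr{q\log(1/q)}{1-q} > \fr{q\log q}{1-q} + \fr{q\log(1/q)}{1-q} = 0 .
\]
Since $\psi_q$ is continuous and, by Lemma~\ref{lem-1}(a), strictly increasing on $(0,\infty)$, it has a unique zero, which by the above lies in $(1,2)$.

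For part (b) I would run the standard sign analysis. Differentiating $\Gamma_q = \exp(\log\Gamma_q)$ gives $\Gamma_q'(x) = \psi_q(x)\Gamma_q(x)$, and $\Gamma_q(x) > 0$ for every $x > 0$, so $\Gamma_q'$ has the same sign as $\psi_q$. By Lemma~\ref{lem-1}(a) and part (a), $\psi_q(x) < 0$ on $(0,x_0)$ and $\psi_q(x) > 0$ on $(x_0,\infty)$; hence $\Gamma_q$ is strictly decreasing on $(0,x_0]$ and strictly increasing on $[x_0,\infty)$, so $x_0$ is the global minimizer of $\Gamma_q$ on $(0,\infty)$. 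This is precisely the inequality $\Gamma_q(x) \geq \Gamma_q(x_0)$.

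There is no real obstacle here; the only thing worth getting right is noticing the identity $\Gamma_q(1) = \Gamma_q(2) = 1$, so that Rolle's theorem (or the mean value computation already present in the proof of Lemma~\ref{lem-2}) immediately pins the zero inside $(1,2)$. Everything after that is routine monotonicity bookkeeping built on Lemma~\ref{lem-1}.
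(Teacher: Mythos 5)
Your proposal is correct, and your primary route for (a) --- Rolle's theorem applied to $\log\Gamma_q$ on $[1,2]$ using $\Gamma_q(1)=\Gamma_q(2)=1$ --- is word for word the paper's own ``second proof of (a)''; your part (b) is also the same sign analysis of $\Gamma_q'=\psi_q\Gamma_q$ that the paper gives (the paper gets the monotonicity of $\Gamma_q'$ from strict convexity of $\Gamma_q$, you get the sign of $\psi_q$ from its strict monotonicity; both rest on Lemma~\ref{lem-1}(a)).

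Your fallback endpoint argument, however, is genuinely different from --- and simpler than --- the paper's first proof, and is worth noting. The paper bounds $\psi_q(2)$ from below by $\fr{q^2\log q}{1-q^2}+\log(1+q)$ via Lemma~\ref{lem-2} at $x=2$, and must then prove that this quantity is positive, which costs an auxiliary calculus lemma (the substitution $q=1/t$ and the two rounds of differentiation of $f(t)=(t^2-1)\log(t+1)-t^2\log t$). You instead combine the recurrence \eqref{help-3-lem-2} at $x=1$ with the \emph{lower} bound of Lemma~\ref{lem-2} at $x=1$, so that
\[
\psi_q(2)=\psi_q(1)-\fr{q\log q}{1-q}>\fr{q\log q}{1-q}-\fr{q\log q}{1-q}=0
\]
with no further work. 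This is a clean shortcut. The only thing it loses is the inequality \eqref{psi-q-2} itself, which the paper reuses later (in the proof of Lemma~\ref{f1-f2-lem-1}), so the longer computation is not wasted in the paper's economy even though it is not needed for this theorem.
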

\begin{proof}
\emph{First proof of (a)}\  Application of Lemma~\ref{lem-2} to $x=1$ and to $x=2$ respectively gives
\begin{equation}\label{psi-q-1}
\fr{q\log q}{1-q} < \psi_q(1) < 0 \quad\text{and\quad}
\fr{q^2\log q}{1-q^2} + \log(1+q) < \psi_q(2).
\end{equation}
By the well-known fact that the function $\psi(x)$ is strictly increasing and continuous we will be done
if we show that
\begin{equation}\label{psi-q-2}
\fr{q^2\log q}{1-q^2} + \log(1+q) >0.
\end{equation}
Letting $q= \fr{1}{t}$ for $t>1$ and after simplification
(\ref{psi-q-2})  becomes
\[
\log t < \log(t+1) - \fr{\log t}{t^2-1},
\]
or equivalently,
\[
(t^2-1) \log (t+1) -t^2 \log t > 0.
\]
Letting $f(t)= (t^2-1) \log (t+1) -t^2 \log t$, we find that
\[
\begin{split}
f'(t) &= 2t \log(t+1) -2t \log t -1 \\
f''(t) &= 2\left(\log\Big(1+\fr{1}{t}\Big) - \fr{1}{t+1} \right)
\end{split}
\]
Then by a combination of the previous identity and the well-known inequality
$\Big( 1+\fr{1}{t}\Big)^{t+1} >e$,
we deduce that $f''(t)>0$ from which it follows that $f'(t)$ is strictly increasing. Then from the above,
$f'(t)> f'(1)  = 2 \log 2 -1 >0$, which in turn shows that $f(t)$ is strictly increasing. Therefore
$f(t) > f(1) =0$, establishing the relation~(\ref{psi-q-2}).
\\
\noindent
\emph{Second proof of (a)}\ As $\Gamma_q(1)=\Gamma_q(2) = 1$, we have by Rolle's theorem applied to $\Gamma_q(x)$ on $[1,2]$
there exists $x_0\in (1,2)$ such that $\big(\Gamma_q(x_0)\big)' = 0$  and hence $\psi_q(x_0)=0$ as $\big(\Gamma_q(x)\big)' = \psi_q(x) \Gamma_q(x)$. Since $\Gamma_q(x)$ is
strictly convex, its derivative is strictly increasing, and so $x_0$ is
unique.
\\
\noindent
(b)\ It is well-known that a strict log-convex function is also strict convex and so, $\Gamma_q(x)$ is strict convex on $(0,\infty)$ by Lemma~\ref{lem-1}(a). That is,
$\big(\Gamma_q(x)\big)'$ is strictly increasing on $(0,\infty)$.
Now Combine this with the identity $\big(\Gamma_q(x)\big)' = \psi_q(x) \Gamma_q(x)$ as follows. Then $\big(\Gamma_q(x)\big)' < \big(\Gamma_q(x_0)\big)' =0$ on the left of $x_0$
and $\big(\Gamma_q(x)\big)' \geq \big(\Gamma_q(x_0)\big)' =0$ on the right of $x_0$, showing that the function
$\Gamma_q(x)$ is strictly decreasing on $(0,x_0)$ and strictly increasing on $(x_0,\infty)$. This completes the proof.
\end{proof}
\begin{rmk}\label{rmk-0}
By the known inequality $\log x < x-1$ for $1\not= x >0$ applied to $x= \fr{1}{q}$, we get
$\fr{q\log q}{q} >-1$ and so by  (\ref{psi-q-1}), we get
\begin{equation}\label{psi-q-3}
-1 < \psi_q(1) < 0.
\end{equation}
\noindent
Bradley~\cite{Bradley} introduced an extension of the Euler gamma constant $\gamma$ as follows
\[
\gamma_q = \log (1-q) - \fr{\log q}{1-q}\sum_{i=1}^{\infty} \fr{q^i}{[i]_q}
\]
and proved that $\lim_{q\to 1} \gamma_q = \gamma$. Mahmoud and Agarwal~\cite{Mahmoud-Agarwal} proved that for $0<q<1$ we have
$\psi_q(1) = \gamma_q$. We note that there no any other information in \cite{Bradley} and \cite{Mahmoud-Agarwal} related
to the generalized constant $\gamma_q$. From (\ref{psi-q-3}), it follows that
\[
0<\gamma_q < 1\ \text{for any\ } q\in(0,1).
\]
Stronger approximations are given in \cite{Bachraoui-Sandor-2}.
\end{rmk}
\begin{theorem}\label{thm-q-gam-psi-zero}
(a)\ The function $\log \Gamma_q(x) + x\psi_q(x)$ is strictly increasing on $(1,\infty)$ with a single
zero which is in $(1,2)$.

\noindent
(b)\  The function $\log \Gamma_q(x) - x\psi_q(x)$ is strictly decreasing on $(0,\infty)$ with a single
zero which is in $(1,2)$.
\end{theorem}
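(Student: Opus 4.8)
The plan is to treat the two parts in parallel: differentiate each function, read off the sign of the derivative from Lemma~\ref{lem-1}, and then locate the zero by the intermediate value theorem, exploiting that $\Gamma_q(1)=\Gamma_q(2)=1$ so that the values at the endpoints collapse to $\psi_q(1)$ and $\psi_q(2)$, whose signs are already recorded in \eqref{psi-q-3} and \eqref{psi-q-1}--\eqref{psi-q-2}.

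For part (a), set $g(x)=\log\Gamma_q(x)+x\psi_q(x)$. Since $\big(\log\Gamma_q(x)\big)'=\psi_q(x)$, we get $g'(x)=2\psi_q(x)+x\psi_q'(x)$, which is positive for every $x\geq 1$ by Lemma~\ref{lem-1}(b); hence $g$ is strictly increasing on $(1,\infty)$ and can have at most one zero there. Because $g(1)=\psi_q(1)$ and $g(2)=2\psi_q(2)$, the bounds $\psi_q(1)<0$ from \eqref{psi-q-3} and $\psi_q(2)>\tfrac{q^2\log q}{1-q^2}+\log(1+q)>0$ from \eqref{psi-q-1}--\eqref{psi-q-2} give $g(1)<0<g(2)$, so continuity of $g$ furnishes a zero in $(1,2)$, and by strict monotonicity it is the unique zero on $(1,\infty)$.

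For part (b), set $h(x)=\log\Gamma_q(x)-x\psi_q(x)$, so that $h'(x)=-x\psi_q'(x)$. Lemma~\ref{lem-1}(a) with $n=1$ (equivalently, the positivity built into complete monotonicity) gives $\psi_q'(x)>0$ on $(0,\infty)$, hence $h'(x)<0$ there and $h$ is strictly decreasing with at most one zero. Now $h(1)=-\psi_q(1)>0$ and $h(2)=-2\psi_q(2)<0$ by the same two inequalities, and continuity produces the unique zero in $(1,2)$.

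There is no serious obstacle here; the only input that is not immediate is the strict sign $\psi_q(2)>0$, but that is exactly the content of \eqref{psi-q-2} established in the proof of Theorem~\ref{thm-psi-q-zero}. It is worth noting that Lemma~\ref{lem-1}(b) is needed on the whole ray $x\geq 1$, not merely on $(1,2)$, precisely so that the zero found in $(1,2)$ is guaranteed to be the only one on $(1,\infty)$.
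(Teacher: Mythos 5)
Your proof is correct and follows essentially the same route as the paper: differentiate, apply Lemma~\ref{lem-1} to get strict monotonicity, and use $\Gamma_q(1)=\Gamma_q(2)=1$ together with $\psi_q(1)<0<\psi_q(2)$ to locate the zero in $(1,2)$. Your part (b) is in fact slightly more explicit than the paper's ``follows in exactly the same way,'' correctly noting that the derivative collapses to $-x\psi_q'(x)$ so only Lemma~\ref{lem-1}(a) is needed and the monotonicity holds on all of $(0,\infty)$.
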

\begin{proof}
Let $f(x) = \log\Gamma_q(x)+ x\psi_q(x)$. Then by Lemma~\ref{lem-1}(b),
$f'(x) = 2\psi_q(x) + x\psi_q'(x) >0$ and therefore $f(x)$ is strictly increasing on $(1,\infty)$. We have
already seen in the proof of Theorem~\ref{thm-psi-q-zero} that $\psi_q(1)<0<\psi_q(2)$. It follows that
$f(1) = \psi_q(1) <0 < 2\psi_q(2) = f(2)$. As the function $f(x)$ is clearly continuous on $(1,\infty)$,
the proof is complete for part (a). Part (b) follows in exactly the same way.
\end{proof}
\begin{corollary}\label{cor-telescoping}
For any $x>1$ and any positive integer $n$ we have the following double inequality
\[
x\psi_q(x) - (x+n)\psi_q(x+n) < \log\fr{\Gamma_q(x)}{\Gamma_q(x+n)} < (x+n)\psi_q(x+n) - x\psi_q(x).
\]
\end{corollary}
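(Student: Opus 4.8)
The plan is to read this off directly from Theorem~\ref{thm-q-gam-psi-zero}, which is precisely the telescoping engine we need: once we know that $\log\Gamma_q(x)+x\psi_q(x)$ is increasing and that $\log\Gamma_q(x)-x\psi_q(x)$ is decreasing on the relevant intervals, it suffices to evaluate these monotone functions at the two endpoints $x$ and $x+n$ and rearrange.

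First I would fix $x>1$ and a positive integer $n$, so that both $x$ and $x+n$ lie in $(1,\infty)$ and $x<x+n$ strictly. For the right-hand inequality, I invoke Theorem~\ref{thm-q-gam-psi-zero}(a): since $f(t)=\log\Gamma_q(t)+t\psi_q(t)$ is strictly increasing on $(1,\infty)$, we have $f(x)<f(x+n)$, i.e.
\[
\log\Gamma_q(x)+x\psi_q(x) < \log\Gamma_q(x+n)+(x+n)\psi_q(x+n),
\]
and moving $\log\Gamma_q(x+n)$ to the left and $x\psi_q(x)$ to the right yields $\log\frac{\Gamma_q(x)}{\Gamma_q(x+n)} < (x+n)\psi_q(x+n) - x\psi_q(x)$. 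For the left-hand inequality, I use Theorem~\ref{thm-q-gam-psi-zero}(b): the function $g(t)=\log\Gamma_q(t)-t\psi_q(t)$ is strictly decreasing on $(0,\infty)$, which contains both $x$ and $x+n$, so $g(x)>g(x+n)$, that is,
\[
\log\Gamma_q(x)-x\psi_q(x) > \log\Gamma_q(x+n)-(x+n)\psi_q(x+n),
\]
which rearranges to $x\psi_q(x)-(x+n)\psi_q(x+n) < \log\frac{\Gamma_q(x)}{\Gamma_q(x+n)}$. Combining the two displays gives the claimed double inequality, and strictness is automatic since $x<x+n$ and both auxiliary functions are \emph{strictly} monotone.

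There is essentially no obstacle here; the only point to watch is the domain bookkeeping. The upper bound relies on part (a), which is asserted only on $(1,\infty)$, so the hypothesis $x>1$ (rather than merely $x>0$) is exactly what is needed; the lower bound, coming from part (b), would in fact survive for all $x>0$. If one prefers a genuinely ``telescoping'' derivation, the same conclusion follows by writing $\log\frac{\Gamma_q(x)}{\Gamma_q(x+n)}=\sum_{k=0}^{n-1}\log\frac{\Gamma_q(x+k)}{\Gamma_q(x+k+1)}$ and applying the $n=1$ case to each summand, but the one-shot monotonicity argument above is shorter and avoids reassembling the telescoped sum.
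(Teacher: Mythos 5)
Your proof is correct and rests on exactly the same ingredient as the paper's: the monotonicity of $\log\Gamma_q(t)\pm t\psi_q(t)$ from Theorem~\ref{thm-q-gam-psi-zero}. The paper reaches the same conclusion by telescoping the unit-step inequalities $f(x+k)<f(x+k+1)$ and summing, which collapses to precisely your one-shot comparison $f(x)<f(x+n)$, so the two arguments are the same in substance and yours is just the streamlined form (you also correctly note that the lower bound needs only part~(b), which holds on all of $(0,\infty)$).
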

\begin{proof}
By Theorem~\ref{thm-q-gam-psi-zero}(a), we have
$\log\Gamma_q(x) + x\psi_q(x) < \log\Gamma_q(y)+y\psi_q(y)$ whenever $1<x<y$. Repeatedly
application of this and simplifying yield
\[
\begin{split}
\log\Gamma_q(x)- \log\Gamma_q(x+1) & <   (x+1)\psi_q(x+1)- x\psi_q(x) \\
\log\Gamma_q(x+1) - \log\Gamma_q(x+2) & < (x+2)\psi_q(x+2) - (x+1)\psi_q(x+1) \\
&\vdots \\
\log\Gamma_q(x+n-1)-\log\Gamma_q(x+n) & < (x+n)\psi_q(x+n) - (x+n-1)\psi_q(x+n-1).
\end{split}
\]
Adding together gives
\[
\log \Gamma_q(x)-\log\Gamma_q(x+n) < -x\psi_q(x)+ (x+n)\psi_q(x+n),
\]
which is equivalent to the first inequality. The second inequality is obtained similarly by considering
the function $\log\Gamma_q(x) - x\psi_q(x)$ which is decreasing by
Theorem~\ref{thm-q-gam-psi-zero}(a).
\end{proof}
\begin{rmk}\label{rmk-1}
It is known, see for instance \cite{Askey}, that the function $x\psi_q (x)$ is strictly convex, so, as $\log \Gamma_q(x)$
is strictly convex too, we get that the function $f(x)=\log\Gamma_q(x)+x\psi_q(x)$ of
Theorm~\ref{thm-q-gam-psi-zero}(a) is strictly convex.
\end{rmk}
\begin{lemma}\label{lem-q-psi-sum}
For any positive integers $k$ and $n$ there holds
\[
\begin{split}
(a)\ & \sum_{i=1}^{n-1} \psi_q\big(\fr{i}{n}\big) = (n-1)\psi_q(1) - n\log\fr{1-q}{1-q^{\fr{1}{n}}} \\
(b)\ & \sum_{i=1}^{n-1} \psi_q^{(k)} \big(\fr{i}{n}\big) = (n^{k+1}-1)\psi_q^{(k)}(1).
%(b)\ & \sum_{\substack{i=1\\(i,n)=1}}^n \psi_q\big(\fr{i}{n}\big) = \big(\psi_q(1)-\log(1-q)\big)\varphi(n) + \sum_{d\mid n}\mu\big(\fr{n}{d}\big) d \log(1-q^{\fr{1}{d}}).
\end{split}
\]
\end{lemma}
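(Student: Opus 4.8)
The plan is to derive both identities from Jackson's $q$-analogue of the Gauss multiplication formula $(\ref{Jackson-q-Gamma})$ by taking logarithms and differentiating in the free variable. First I would replace $q$ by $q^{\fr{1}{n}}$ in $(\ref{Jackson-q-Gamma})$; this is legitimate since $q^{\fr{1}{n}}\in(0,1)$, and because $\bigl(q^{\fr{1}{n}}\bigr)^{n}=q$ it turns every $\Gamma_{q^n}$ into $\Gamma_q$ and replaces the factor in front by $\bigl(\fr{1-q}{1-q^{\fr{1}{n}}}\bigr)^{nz-1}$, giving
\[
\left(\fr{1-q}{1-q^{\fr{1}{n}}}\right)^{nz-1}\prod_{j=0}^{n-1}\Gamma_q\left(z+\fr{j}{n}\right) = \Gamma_q(nz)\prod_{j=1}^{n-1}\Gamma_q\left(\fr{j}{n}\right).
\]
Taking logarithms yields the identity
\[
(nz-1)\log\fr{1-q}{1-q^{\fr{1}{n}}} + \sum_{j=0}^{n-1}\log\Gamma_q\left(z+\fr{j}{n}\right) = \log\Gamma_q(nz) + \sum_{j=1}^{n-1}\log\Gamma_q\left(\fr{j}{n}\right),
\]
which holds for all $z>0$; since both sides are smooth in $z$ we may differentiate it as often as needed.

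For part (a) I would differentiate once in $z$. Using $\bigl(\log\Gamma_q(x)\bigr)'=\psi_q(x)$, the chain rule $\fr{d}{dz}\log\Gamma_q(nz)=n\psi_q(nz)$, and the fact that the last sum is constant in $z$, this gives
\[
n\log\fr{1-q}{1-q^{\fr{1}{n}}} + \sum_{j=0}^{n-1}\psi_q\left(z+\fr{j}{n}\right) = n\psi_q(nz).
\]
Now I would set $z=\fr{1}{n}$, so that the arguments $z+\fr{j}{n}$ for $j=0,\dots,n-1$ run through $\fr{1}{n},\fr{2}{n},\dots,\fr{n}{n}$ and $nz=1$:
\[
n\log\fr{1-q}{1-q^{\fr{1}{n}}} + \sum_{i=1}^{n}\psi_q\left(\fr{i}{n}\right) = n\psi_q(1).
\]
Splitting off the $i=n$ term, which equals $\psi_q(1)$, and rearranging gives exactly the formula in (a).

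For part (b) I would instead differentiate the logarithmic identity $k+1$ times in $z$. Because $k\ge 1$, the affine term $(nz-1)\log\fr{1-q}{1-q^{\fr{1}{n}}}$ is annihilated already by the second derivative, the constant sum disappears, each $\log\Gamma_q\bigl(z+\fr{j}{n}\bigr)$ becomes $\psi_q^{(k)}\bigl(z+\fr{j}{n}\bigr)$, and $\log\Gamma_q(nz)$ becomes $n^{k+1}\psi_q^{(k)}(nz)$, the chain rule contributing one factor $n$ at each differentiation. This gives
\[
\sum_{j=0}^{n-1}\psi_q^{(k)}\left(z+\fr{j}{n}\right) = n^{k+1}\psi_q^{(k)}(nz),
\]
and putting $z=\fr{1}{n}$ and peeling off the $i=n$ summand $\psi_q^{(k)}(1)$ yields the formula in (b).

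There is no serious obstacle: this is essentially a routine differentiation of a known identity. The points that need care are the chain-rule bookkeeping that produces the factor $n$ (respectively $n^{k+1}$) from $\log\Gamma_q(nz)$, the reindexing $j\mapsto i=j+1$ after the specialisation $z=\fr{1}{n}$ together with the separation of the $i=n$ term, and --- in part (b) --- the remark that the hypothesis $k\ge 1$ is precisely what makes the affine prefactor term drop out; for $k=0$ one would retain an additional logarithmic term and merely recover the (index-shifted) statement of part (a).
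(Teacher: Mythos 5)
Your argument is correct and is essentially the paper's own proof: both substitute so that $\Gamma_{q^n}$ becomes $\Gamma_q$ in Jackson's formula (\ref{Jackson-q-Gamma}), take logarithms, differentiate in $z$, and specialise $z=\fr{1}{n}$, peeling off the $i=n$ term. Your bookkeeping of the number of differentiations in part (b) ($k+1$ derivatives of the logarithmic identity, with $k\geq 1$ killing the affine prefactor term) is in fact slightly more careful than the paper's wording.
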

\begin{proof}
In (\ref{Jackson-q-Gamma}) replacing $q^n$ with $q$ and taking logarithms on both sides give
\begin{equation}\label{Jackson-q-derive}
(nz-1)\log\fr{1-q}{1-q^{\fr{1}{n}}} + \sum_{i=0}^{n-1}\log\Gamma_q (z+\fr{i}{n})
= \log \Gamma_q (nz) + \log\prod_{i=1}^{n-1}\Gamma_q\big(\fr{i}{n}\big).
\end{equation}
Differentiating with respect to $z$ and then letting $z=\fr{1}{n}$ yield
\[
n\log\fr{1-q}{1-q^{\fr{1}{n}}} + \sum_{i=1}^{n-1}\psi_q\big(\fr{i}{n}\big)  + \psi_q(1) = n\psi_q(1),
\]
which is equivalent to the desired identity in part (a). To prove part (b), first differentiate with respect $z$,
$k$ times both sides of (\ref{Jackson-q-derive}) to obtain
\[
\sum_{i=0}^{n-1} \psi_q^{(k)} \big(z+\fr{i}{n}\big) = n^{k+1} \psi_q^{(k)} (nz),
\]
then let $z=\fr{1}{n}$ to get
\[
\sum_{i=1}^{n-1} \psi_q^{(k)} \big(\fr{i}{n}\big) = (n^{k+1}-1) \psi_q^{(k)} (1),
\]
as desired.
\end{proof}
\begin{theorem}\label{thm-q-psi-sum}
For any positive integers $k$ and $n$ there holds
\[
\begin{split}
(a)\ & (n-1) \Big(\psi_q(1) - \psi_q\big(\fr{1}{2}\big) \Big)< n \log\fr{1-q}{1-q^{\fr{1}{n}}} \\
(b)\ & (n-1) \psi_q^{(2k-1)}\big(\fr{1}{2}\big) < (n^{2k}-1) \psi_q^{(2k-1)} (1) \\
(c)\ & (n-1) \psi_q^{(2k)}\big(\fr{1}{2}\big) > (n^{2k+1}-1) \psi_q^{(2k)} (1).
\end{split}
\]
\end{theorem}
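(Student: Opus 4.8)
The plan is to deduce all three inequalities from Lemma~\ref{lem-q-psi-sum} by inserting its closed forms into Jensen's inequality, applied to $\psi_q$ itself for part~(a) and to a suitable derivative $\psi_q^{(m)}$ for parts~(b) and~(c). The only elementary observation used throughout is that the nodes $\fr{1}{n},\fr{2}{n},\ldots,\fr{n-1}{n}$ have arithmetic mean $\fr{1}{n-1}\sum_{i=1}^{n-1}\fr{i}{n}=\fr{1}{2}$.

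For part~(a), substituting the identity of Lemma~\ref{lem-q-psi-sum}(a) reduces the claim to the equivalent estimate
\[
\sum_{i=1}^{n-1}\psi_q\lt\fr{i}{n}\rt<(n-1)\,\psi_q\lt\fr{1}{2}\rt.
\]
Taking $n=1$ in Lemma~\ref{lem-1}(a) gives $\psi_q''(x)<0$ on $(0,\infty)$, so $\psi_q$ is strictly concave there; Jensen's inequality for a concave function, applied with equal weights at the nodes $\fr{i}{n}$ whose mean is $\fr{1}{2}$, yields exactly the displayed estimate.

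Parts~(b) and~(c) follow the same recipe, now with Lemma~\ref{lem-q-psi-sum}(b), taking the order of differentiation to be $2k-1$ and $2k$ respectively; in each case one is reduced to comparing $(n-1)\,\psi_q^{(m)}\lt\fr{1}{2}\rt$ with $\sum_{i=1}^{n-1}\psi_q^{(m)}\lt\fr{i}{n}\rt$ for $m=2k-1$ and $m=2k$. The direction of convexity is read off from the strict complete monotonicity of $\psi_q'$ in Lemma~\ref{lem-1}(a): since $\lt\psi_q^{(m)}\rt''=\lt\psi_q'\rt^{(m+1)}$ has the sign of $(-1)^{m+1}$, the function $\psi_q^{(2k-1)}$ is strictly convex while $\psi_q^{(2k)}$ is strictly concave, so Jensen applied in the appropriate direction produces~(b) and~(c).

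The one point requiring attention is strictness: Jensen collapses to an equality exactly when the nodes coincide, which happens only for $n\le 2$ --- and indeed for $n=2$ the single node equals $\fr{1}{2}$ and Lemma~\ref{lem-q-psi-sum} already forces equality in all three relations --- whereas for $n\ge 3$ the nodes $\fr{1}{n},\ldots,\fr{n-1}{n}$ are genuinely distinct and the strictness built into Lemma~\ref{lem-1}(a) upgrades strict convexity/concavity to the strict form of Jensen. No deeper estimate is needed; the only mild obstacle is keeping track of the parity of $m$ when fixing the direction of Jensen's inequality.
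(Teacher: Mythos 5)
Your proof is correct and follows essentially the same route as the paper: apply Jensen's inequality to $\psi_q$ (strictly concave) and to $\psi_q^{(2k-1)}$, $\psi_q^{(2k)}$ (strictly convex resp.\ concave, by Lemma~\ref{lem-1}(a)) at the nodes $\fr{i}{n}$ with mean $\fr{1}{2}$, then substitute the closed forms of Lemma~\ref{lem-q-psi-sum}. Your added observation that strictness genuinely requires $n\ge 3$ (the stated inequalities degenerate to equalities, or fail, for $n=1,2$) is a valid point the paper's own proof does not address.
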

\begin{proof}
(a)\ By Lemma~\ref{lem-1}(a), the function $\psi_q(x)$ is strictly concave. Then by an application of
Jensen's inequality to this function with $k=n-1$ and $x_i = \fr{i}{n}$ for $i=1,\ldots,n-1$ we find
\[
\psi_q\Big(\fr{\sum_{i=1}^{n-1}\fr{i}{n}}{n-1} \Big) > \fr{1}{n-1} \sum_{i=1}^{n-1}\psi_q\big(\fr{i}{n}\big).
\]
Then by an appeal to Lemma~\ref{lem-q-psi-sum}(a) along with simplification we derive
\[
(n-1)\psi_q\big(\fr{1}{2}\big) >  (n-1) \psi_q(1) - n\log\fr{1-q}{1-q^{\fr{1}{n}}},
\]
which proves part (a).

\noindent
(b)\ By Lemma~\ref{lem-1}(a), the function $\psi_q^{(2k-1)}(x)$ is strictly convex and therefore by
Jensen's inequality applied to this function with $k=n-1$ and $x_i = \fr{i}{n}$ for $i=1,\ldots,n-1$ one has
\[
\psi_q^{(2k-1)} \Big(\fr{\sum_{i=1}^{n-1}\fr{i}{n}}{n-1} \Big) <
\fr{1}{n-1} \sum_{i=1}^{n-1}\psi_q^{(2k-1)}\big(\fr{i}{n}\big).
\]
Now use Lemma~\ref{lem-q-psi-sum}(b) and simplify to deduce that
\[
(n-1) \psi_q^{(2k-1)}\big(\fr{1}{2}\big) < (n^{2k}-1) \psi_q^{(2k-1)}(1),
\]
which is the desired relation in part (b). The similar proof of part (c) is omitted.
\end{proof}
\section{Convexity and inequalities for powers, ratios, and products of $\Gamma_q(x)$}\label{sec-3}
%In this section we provide a variety of inequalities which are expressed in terms of the $q$-gamma function.
\begin{lemma}\label{lem-3-1}
Let $f:(0,\infty)\to (0,\infty)$ and let $g(x)= \fr{f(x+1)}{f(x)}$. If $f(x)$ is strictly log-convex on $(0,\infty)$, then
for any $x>0$ and any $a\in (0, 1)$ we have
\[
\big( g(x) \big)^{1-a} < \fr{f(x+1)}{f(x+a)} < \big(g(x+a) \big)^{1-a}.
\]
\end{lemma}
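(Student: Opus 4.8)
The plan is to pass to logarithms and reduce the entire statement to the strict convexity of $\phi(x):=\log f(x)$ on $(0,\infty)$. Since $g(x)=\frac{f(x+1)}{f(x)}$, we have $\log g(x)=\phi(x+1)-\phi(x)$ and $\log g(x+a)=\phi(x+a+1)-\phi(x+a)$, and all quantities appearing in the asserted chain are positive, so taking logarithms the claim is equivalent to
\[
(1-a)\big(\phi(x+1)-\phi(x)\big) \;<\; \phi(x+1)-\phi(x+a) \;<\; (1-a)\big(\phi(x+a+1)-\phi(x+a)\big).
\]
Thus it suffices to prove this double inequality for an arbitrary strictly convex $\phi$, and then exponentiate.

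First I would handle the left-hand inequality. The key observation is that $x+a = a(x+1)+(1-a)x$ is a nondegenerate convex combination of $x$ and $x+1$ (nondegenerate because $a\in(0,1)$), so strict convexity of $\phi$ yields $\phi(x+a) < a\,\phi(x+1)+(1-a)\,\phi(x)$. Subtracting both sides from $\phi(x+1)$ and collecting terms gives precisely $\phi(x+1)-\phi(x+a) > (1-a)\big(\phi(x+1)-\phi(x)\big)$, which is the left inequality.

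For the right-hand inequality I would use the complementary convex combination $x+1 = a(x+a)+(1-a)(x+a+1)$, which places $x+1$ strictly between $x+a$ and $x+a+1$; strict convexity gives $\phi(x+1) < a\,\phi(x+a)+(1-a)\,\phi(x+a+1)$, and subtracting $\phi(x+a)$ from both sides and simplifying produces $\phi(x+1)-\phi(x+a) < (1-a)\big(\phi(x+a+1)-\phi(x+a)\big)$. Exponentiating the resulting chain and recalling the definitions of $g(x)$ and $g(x+a)$ then gives the stated bounds. There is no real obstacle: the only point requiring a little care is that $a\in(0,1)$ and $x>0$ make both convex combinations genuinely interior, so that \emph{strict} convexity delivers the \emph{strict} inequalities claimed; equivalently, one can phrase the whole argument via the strict monotonicity of the difference quotient $\frac{\phi(v)-\phi(u)}{v-u}$ of a strictly convex function, comparing the slope of $\phi$ over $[x+a,x+1]$ with its slopes over the enlarged intervals $[x,x+1]$ and $[x+a,x+a+1]$.
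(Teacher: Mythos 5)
Your proposal is correct and is essentially the paper's own argument: the paper applies strict log-convexity in the multiplicative form $f(uy+(1-u)z)<f(y)^u f(z)^{1-u}$ with exactly the same two convex combinations ($x+a$ between $x$ and $x+1$ for the left bound, $x+1$ between $x+a$ and $x+a+1$ for the right), which is what you do after taking logarithms. No substantive difference.
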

\begin{proof}
$f(x)$ is strictly log-convex on $(0,\infty)$ we have for any $u\in[0,1]$ and any $y\not=z >0$
\[
\log \big(f(uy + (1-u) z) \big) < u\log f(y) + (1-u) \log f(z)
\]
or equivalently,
\begin{equation}\label{help-1-lem-3-1}
f(uy + (1-u) z) < \big( f(y) \big)^u  \big( f(z) \big)^{1-u}.
\end{equation}
Let in (\ref{help-1-lem-3-1}) $y:=x$, $z:= x+1$, and $u:= 1-a$ to obtain
\[
f(x+a) < \big(f(x)\big)^{1-a} \big( f(x+1) \big)^a,
\]
from which we easily get the first inequality.
As to the second inequality, let in (\ref{help-1-lem-3-1}) $y:= x+a$, $z:= x+a+1$, and $u:= a$ and proceed as before.
\end{proof}
A classical result by Gautschi~\cite{Gautschi} states that
\[
x^{1-a} < \fr{\Gamma(x+1)}{\Gamma(x+a)} < e^{(1-a)\psi(x+1)} \quad (0<a<1).
\]
We have the following $q$-variant which seems to be new.
\begin{corollary}
Let $x>0$ and $a\in(0,1)$. Then
\[
%\begin{split}
%(a)\quad &
\big( [x]_q \big)^{1-a} < \fr{\Gamma_q(x+1)}{\Gamma_q(x+a)} < \big( [x+a]_q \big)^{1-a}.
%(b)\quad &
%\fr{ \big([x]_q \big)^a}{\Gamma_q(a+1)} < {x\brack a}_q < \fr{\big( [x+1-a] \big)^a}{\Gamma_q(a+1)}.
%\end{split}
\]
\end{corollary}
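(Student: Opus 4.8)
The plan is to read off this corollary as the special case $f=\Gamma_q$ of Lemma~\ref{lem-3-1}. Concretely, I would first verify that $\Gamma_q$ satisfies the hypothesis of that lemma, namely that it is strictly log-convex on $(0,\infty)$. This is immediate from Lemma~\ref{lem-1}(a): taking $n=0$ in the strict complete monotonicity statement gives $\psi_q'(x)>0$ for all $x>0$, and since
\[
\big(\log\Gamma_q(x)\big)'' = \psi_q'(x) > 0,
\]
the function $\log\Gamma_q$ is strictly convex, i.e. $\Gamma_q:(0,\infty)\to(0,\infty)$ is strictly log-convex.

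Next I would identify the auxiliary function $g(x)=\fr{\Gamma_q(x+1)}{\Gamma_q(x)}$ attached to $f=\Gamma_q$. By the functional equation~(\ref{basic-fact}) we have $g(x)=[x]_q$, and consequently $g(x+a)=[x+a]_q$. Substituting $f=\Gamma_q$, $g(x)=[x]_q$, and $g(x+a)=[x+a]_q$ into the conclusion
\[
\big(g(x)\big)^{1-a} < \fr{f(x+1)}{f(x+a)} < \big(g(x+a)\big)^{1-a}
\]
of Lemma~\ref{lem-3-1} produces exactly the asserted double inequality for every $x>0$ and every $a\in(0,1)$.

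I do not expect any genuine obstacle here: the corollary is a direct specialization, and everything needed has already been established in the excerpt. The one detail I would be careful to record is that both inequalities are \emph{strict}. This holds because the log-convexity used in Lemma~\ref{lem-3-1} is strict (we have $\psi_q'>0$, not merely $\psi_q'\ge 0$) and because $a\in(0,1)$ makes the convex-combination weight $u=1-a$ in~(\ref{help-1-lem-3-1}) a proper interior point of $[0,1]$, so no degenerate equality case can occur. It is also worth remarking, for comparison with Gautschi's classical bounds quoted above, that the upper bound $([x+a]_q)^{1-a}$ is the natural $q$-analogue of $e^{(1-a)\psi(x+1)}$, since $[x+a]_q = \fr{f(x+a+1)}{f(x+a)}$ plays the role of $e^{\psi(x+a+1)}$.
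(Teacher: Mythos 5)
Your proposal is correct and is essentially the paper's own argument: the paper likewise obtains the corollary by applying Lemma~\ref{lem-3-1} with $f=\Gamma_q$, so that $g(x)=\Gamma_q(x+1)/\Gamma_q(x)=[x]_q$ by~(\ref{basic-fact}). If anything, your version is slightly more careful, since you explicitly verify the strict log-convexity of $\Gamma_q$ from Lemma~\ref{lem-1}(a), whereas the paper's one-line proof writes ``apply Lemma~\ref{lem-3-1} to $f(x)=\log\Gamma_q(x)$,'' an evident slip for $f(x)=\Gamma_q(x)$.
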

\begin{proof}
Simply apply Lemma~\ref{lem-3-1} to the function $f(x)= \log \Gamma_q(x)$.
\end{proof}
\noindent
The following result is well-known.
\begin{lemma}\label{lem-3-2}
Let $I\subseteq \mathbb{R}$ and let $f:I \to (0,\infty)$.

\noindent
(a)\ If $f(x)$ is concave (strictly concave), then
$\fr{1}{f(x)}$ is convex (strictly convex).

\noindent
(b)\ The function $f(x)$ is log-convex if and only if $\fr{1}{f(x)}$ is log-concave.
\end{lemma}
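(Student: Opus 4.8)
The plan is to dispose of (b) as a one-line unwinding of definitions and to obtain (a) from the standard principle that a convex nonincreasing function composed with a concave function is convex. No smoothness on $f$ needs to be assumed, so I would avoid second-derivative computations, although I will mention the differentiable shortcut for completeness.

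For part (a), first I would record that $\varphi(t) = 1/t$ is strictly convex and strictly decreasing on $(0,\infty)$, and note that since $f$ maps $I$ into $(0,\infty)$ we may write $1/f = \varphi \circ f$. Then, for $x \neq y$ in $I$ and $\lambda \in (0,1)$, I would chain two inequalities: concavity of $f$ gives $f(\lambda x + (1-\lambda) y) \geq \lambda f(x) + (1-\lambda) f(y)$, with strict inequality when $f$ is strictly concave; since $\varphi$ is (strictly) decreasing, applying $\varphi$ reverses this to $\varphi\big(f(\lambda x + (1-\lambda) y)\big) \leq \varphi\big(\lambda f(x) + (1-\lambda) f(y)\big)$; and convexity of $\varphi$ gives $\varphi\big(\lambda f(x) + (1-\lambda) f(y)\big) \leq \lambda \varphi(f(x)) + (1-\lambda)\varphi(f(y))$. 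Concatenating the three displays shows $1/f$ is convex, and in the strict case at least one of the two steps is strict, so $1/f$ is strictly convex. As an alternative under the hypothesis $f \in C^2$, one may instead just compute $(1/f)'' = \big(2(f')^2 - f f''\big)/f^3$, which is $\geq 0$ since $f > 0$ and $f'' \leq 0$, and is $> 0$ where $f'' < 0$.

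For part (b), I would simply observe that the assertion "$1/f$ is log-concave" means "$\log(1/f) = -\log f$ is concave", and that a function is concave precisely when its negative is convex; hence this is equivalent to "$\log f$ is convex", i.e.\ to "$f$ is log-convex". The only place in the whole argument that calls for any care is the bookkeeping of strictness in part (a): one must use that the transform $\varphi(t) = 1/t$ is \emph{strictly} decreasing, so strict concavity of $f$ survives the passage through $\varphi$, and that $\varphi$ is convex, so the averaging step does not destroy it either. Beyond that, this is a textbook lemma and there is no genuine obstacle.
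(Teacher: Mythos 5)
Your proof is correct. The paper states this lemma without proof, simply labelling it ``well-known,'' so there is no argument in the text to compare against; your composition argument for (a) (convex decreasing $\varphi(t)=1/t$ composed with a concave $f$, with the strictness tracked through the strictly decreasing step) and your one-line unwinding of (b) via $\log(1/f)=-\log f$ are exactly the standard justifications, and both are complete.
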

Note that the converse of Lemma~\ref{lem-3-2}(a) is not true. For example, the function $e^x$ is convex but the reciprocal $e^{-x}$ is not concave.
\begin{lemma}\label{lem-3-2-1}
For any $x>0$ we have
\[
\begin{split}
\emph{(a)\ } \big(\psi_q(x+1)\big)' < \fr{-q^x \log q}{1-q^x} < \big( \psi_q(x) \big)' \\
\emph{(b)\ } \big(\psi_q(x)\big)'' < \fr{-q^x (\log q)^2}{(1-q^x)^2} < \big( \psi_q(x+1) \big)''.
\end{split}
\]
\end{lemma}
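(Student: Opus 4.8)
The plan is to derive both parts from the functional equation for $\psi_q$ obtained by differentiating the basic relation~(\ref{basic-fact}), namely
\[
\psi_q(x) = \frac{q^x\log q}{1-q^x} + \psi_q(x+1),
\]
which is equation~(\ref{help-3-lem-2}) in the proof of Lemma~\ref{lem-2}. Combined with the Lagrange mean value theorem and the complete monotonicity recorded in Lemma~\ref{lem-1}(a), this reduces each inequality to exactly the two-line argument already used for Lemma~\ref{lem-2}, but now applied to $\psi_q'$ and $\psi_q''$ in place of $\psi_q$ itself.

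For part (a), I would rewrite the functional equation as
\[
\psi_q(x+1) - \psi_q(x) = \frac{-q^x\log q}{1-q^x},
\]
and apply the mean value theorem to $\psi_q$ on $[x,x+1]$: there is $t\in(0,1)$ with $\psi_q'(x+t)$ equal to the right-hand side. Since $-\psi_q''>0$ on $(0,\infty)$ by Lemma~\ref{lem-1}(a), the function $\psi_q'$ is strictly decreasing, so $\psi_q'(x+1) < \psi_q'(x+t) < \psi_q'(x)$, which is precisely the double inequality in (a).

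For part (b), I would differentiate the functional equation once in $x$. Writing $q^x = e^{x\log q}$, an elementary computation gives $\frac{d}{dx}\!\left(\frac{q^x\log q}{1-q^x}\right) = \frac{q^x(\log q)^2}{(1-q^x)^2}$, hence
\[
\psi_q'(x+1) - \psi_q'(x) = \frac{-q^x(\log q)^2}{(1-q^x)^2}.
\]
Applying the mean value theorem to $\psi_q'$ on $[x,x+1]$ yields $s\in(0,1)$ with $\psi_q''(x+s)$ equal to the right-hand side, and Lemma~\ref{lem-1}(a) with $n=2$ gives $\psi_q'''>0$, i.e. $\psi_q''$ is strictly increasing; therefore $\psi_q''(x) < \psi_q''(x+s) < \psi_q''(x+1)$, which is (b).

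No genuinely hard step is expected here; the only point requiring a little care is the bookkeeping of signs, since $\log q<0$ while $1-q^x>0$ for $x>0$, so that the common middle term in (a) is positive and the one in (b) is negative, matching the signs of $\psi_q'$ and $\psi_q''$ guaranteed by Lemma~\ref{lem-1}(a). If one prefers, the two mean value steps can equally be phrased as integral estimates using the strict monotonicity of $\psi_q'$ and $\psi_q''$, but the Lagrange form is the most economical.
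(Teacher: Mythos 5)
Your proof is correct and follows the same route as the paper: differentiate the functional equation $\psi_q(x)=\frac{q^x\log q}{1-q^x}+\psi_q(x+1)$, apply the Lagrange mean value theorem on $[x,x+1]$ to $\psi_q$ (resp.\ $\psi_q'$), and use the monotonicity of $\psi_q'$ and $\psi_q''$ from Lemma~\ref{lem-1}(a). Your write-up is in fact cleaner, since the paper's proof of part (b) contains a slip (it writes $\psi_q(x+1)-\psi_q(x)=\big(\psi_q(z)\big)''$ and attributes the monotonicity to $\psi_q'$ rather than $\psi_q''$), which you have implicitly corrected.
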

\begin{proof}
By the Lagrange mean value theorem there is $y\in(x,x+1)$ such that
$\psi_q(x+1) - \psi_q(x) = \big( \psi_q(y) \big)'$. As the the function
$\big(\psi_q(x) \big)'$ is strictly decreasing by Lemma~\ref{lem-1}(a), a combination of the
previous identity with the relation (\ref{help-3-lem-2}) yields part (a). To establish part (b),
note first that using the Lagrange mean value theorem there exists $z\in(x,x+1)$ such that
$\psi_q(x+1) - \psi_q(x) = \big( \psi_q(z) \big)''$. Note also that by (\ref{help-3-lem-2}) one has
\[
\big(\psi_q(x+1)\big)' - \big(\psi_q(x)\big)' = \fr{-q^x (\log q)^2}{(1-q^x)^2}.
\]
Moreover, the function $\big(\psi_q(x) \big)'$ is strictly increasing by Lemma~\ref{lem-1}(a).
Thus part (b) follows by a combination of the above facts.
\end{proof}
\begin{corollary}\label{cor-add-1}
There holds
\[
\begin{split}
\emph{(a)\ } \big(\psi_q(x+1)\big)' & < \fr{1}{x}, \ \text{for all\ } x>0 \\
\emph{(b)\ } \big( \psi_q(x+1)\big)'' & > \fr{-1}{x^2}, \ \text{for all\ } x>0.
\end{split}
\]
\end{corollary}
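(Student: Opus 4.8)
The plan is to deduce both parts directly from Lemma~\ref{lem-3-2-1}, reducing each inequality to a classical elementary estimate for the exponential function; no further analysis of $\psi_q$ itself is needed.

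For part (a), Lemma~\ref{lem-3-2-1}(a) already gives $\big(\psi_q(x+1)\big)' < \frac{-q^x\log q}{1-q^x}$, so it suffices to verify that $\frac{-q^x\log q}{1-q^x} \le \frac{1}{x}$ for every $x>0$. Multiplying through by $x(1-q^x)>0$ and writing $u:=-x\log q$, which is positive because $0<q<1$, we have $q^x=e^{-u}$ and the claim becomes $ue^{-u}<1-e^{-u}$, i.e.\ $(1+u)e^{-u}<1$, i.e.\ $e^{u}>1+u$. This is the standard strict inequality valid for all $u>0$, and part (a) follows.

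For part (b), Lemma~\ref{lem-3-2-1}(b) gives $\big(\psi_q(x+1)\big)'' > \frac{-q^x(\log q)^2}{(1-q^x)^2}$, so it is enough to prove $\frac{q^x(\log q)^2}{(1-q^x)^2} \le \frac{1}{x^2}$. Since $0<q^x<1$, all the quantities in sight are positive, so after multiplying by $x^2(1-q^x)^2$ and taking positive square roots the inequality becomes $x\sqrt{q^x}\,|\log q| < 1-q^x$; with $u:=-x\log q>0$ as above and $v:=u/2$ this rewrites as $2v < e^{v}-e^{-v}=2\sinh v$, i.e.\ $\sinh v > v$ for $v>0$, which is again classical. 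Hence part (b).

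There is essentially no obstacle here; the only point deserving a word of care is the passage to square roots in part (b), which is legitimate precisely because $1-q^x>0$. If one prefers to avoid square roots, the substitution $s:=q^{-x/2}>1$ turns the required estimate $u^2e^{-u}<(1-e^{-u})^2$ into $s^2-1-2s\log s>0$ for $s>1$; this holds because the left-hand side vanishes at $s=1$ and has derivative $2(s-1-\log s)>0$ on $(1,\infty)$, using $\log s < s-1$. This is the route I would actually write up.
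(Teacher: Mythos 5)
Your proof is correct and follows essentially the same route as the paper: both reduce (a) and (b) to scalar inequalities via Lemma~\ref{lem-3-2-1} and a substitution $t=q^x$ (resp.\ $p=q^{-x}$). The only cosmetic difference is in part (b), where the paper cites the classical mean inequality $G(p,1)<L(p,1)$, i.e.\ $\sqrt{p}<(p-1)/\log p$, while you verify the equivalent inequality $s^2-1-2s\log s>0$ (or $\sinh v>v$) directly.
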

\begin{proof}
Upon letting $t=q^x$ and using Lemma~\ref{lem-3-2-1}, we see that to prove part (a)
it will be enough to show that $\fr{t \log t}{1-t} >-1$. But this inequality has been
established in Remark~\ref{rmk-0}. Similarly, but now letting $q^x = \fr{1}{p}$ for $p>1$, we can check
that in order to prove part (b),
it will be enough to show that
\[
\sqrt{p}<\fr{p-1}{\log p}.
\]
To this end, let $A(a,b)$ be the arithmetic
mean, $G(a,b)$ be the geometric mean, and $L(a,b)$ be the logarithmic mean, i.e.
\[ A(a,b) = \fr{a+b}{2},\quad G(a,b) = \sqrt{ab},\quad L(a,b) = \fr{b-a}{\log b - \log a}.
\]
It is well-known (see for instance S\'{a}ndor~\cite{Sandor-1990, Sandor-2016}) for these means that
\begin{equation}\label{means}
G(a,b)<L(a,b)<A(a,b).
\end{equation}
In particular, we have that $G(p,1)< L(p,1)$, which is the desired inequality.
\end{proof}
\begin{theorem}\label{thm-1}
The function $f(x)= \big(\Gamma_q(x+1)\big)^{\fr{1}{x}}$ is strictly log-concave and strictly increasing
on $(0,\infty)$.
\end{theorem}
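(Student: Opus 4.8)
The plan is to pass to logarithms. Set $g(x)=\log\Gamma_q(x+1)$ and $h(x)=\log f(x)=\frac{g(x)}{x}$; showing that $h$ is strictly increasing gives that $f=e^{h}$ is strictly increasing, and showing that $h$ is strictly concave is exactly the assertion that $f$ is strictly log-concave. First I would record the facts I need about $g$: since $\Gamma_q(1)=1$ we have $g(0)=0$; since $\Gamma_q$ is strictly log-convex by Lemma~\ref{lem-1}(a), $g$ is strictly convex on $(0,\infty)$; and, writing $g'(x)=\psi_q(x+1)$, the strict complete monotonicity of $\psi_q'$ from Lemma~\ref{lem-1}(a) yields $g''(x)=\psi_q'(x+1)>0$ and $g'''(x)=\psi_q''(x+1)<0$ for $x>0$, together with finiteness of $g'$ and $g''$ at $0$.

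For the monotonicity part I would compute
\[
h'(x)=\frac{x\,g'(x)-g(x)}{x^{2}},
\]
and note that, because $g'$ is strictly increasing and $g(0)=0$,
\[
g(x)=\int_{0}^{x}g'(t)\,dt<\int_{0}^{x}g'(x)\,dt=x\,g'(x)\qquad(x>0),
\]
so that $h'(x)>0$; equivalently, $g(x)/x$ is the slope of the secant of the strictly convex function $g$ on $[0,x]$ and is therefore strictly increasing in $x$.

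For log-concavity I would differentiate once more. A straightforward computation gives
\[
h''(x)=\frac{x^{2}g''(x)-2x\,g'(x)+2g(x)}{x^{3}}=:\frac{\phi(x)}{x^{3}},
\]
so it is enough to prove $\phi(x)<0$ for $x>0$. The key observation is that $\phi$ telescopes under differentiation:
\[
\phi'(x)=2x\,g''(x)+x^{2}g'''(x)-2g'(x)-2x\,g''(x)+2g'(x)=x^{2}g'''(x),
\]
which is negative on $(0,\infty)$ precisely by the sign $\psi_q''<0$ coming from Lemma~\ref{lem-1}(a). Since $g(0)=0$ and $g',g''$ are finite at $0$ we have $\phi(0^{+})=0$, so the mean value theorem forces $\phi(x)<0$ for every $x>0$, hence $h''(x)<0$ and $f$ is strictly log-concave.

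The conceptual content here is small; the work is the bookkeeping. The one place where something could go wrong is the log-concavity step: it requires carrying $\log\Gamma_q$ to its third derivative and recognizing that the numerator $\phi$ of $h''$ satisfies $\phi'(x)=x^{2}g'''(x)$, so that the full strength of Lemma~\ref{lem-1}(a) (and not merely log-convexity of $\Gamma_q$) is being used. I expect this identification of $\phi'$ to be the main obstacle, and once it is in hand the proof closes immediately.
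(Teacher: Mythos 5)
Your proposal is correct and follows essentially the same route as the paper: the paper likewise writes $x^3(\log f(x))''=x^2\psi_q'(x+1)-2x\psi_q(x+1)+2\log\Gamma_q(x+1)=:h(x)$, notes $h'(x)=x^2\psi_q''(x+1)<0$ and $h(0)=0$ to get log-concavity, and proves monotonicity from the numerator $a(x)=x\psi_q(x+1)-\log\Gamma_q(x+1)$ with $a(0)=0$, $a'(x)=x\psi_q'(x+1)>0$ (your secant-slope/integral argument is an equivalent packaging of this). No gaps.
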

\begin{proof}
We find
\begin{equation}\label{help-thm-1}
x^3 \big(\log f(x) \big)'' = x^2 \big(\psi_q(x+1) \big)' - 2x \psi_q(x+1) + 2 \log \Gamma_q (x+1)
\end{equation}
and letting $h(x) = x^2 \big(\psi_q(x+1) \big)' - 2x \psi_q(x+1) + 2 \log \Gamma_q (x+1)$, we have
$h'(x) = x^2 \big(\psi_q(x+1)\big)''$ and so $h'(x) <0$ by Lemma~\ref{lem-1}(a), that is, $h(x)$ is strictly decreasing on $(0,\infty)$. Then $h(x) < h(0)=0$, which combined
with equation (\ref{help-thm-1}) implies that $\log f(x)$ is strictly concave on $(0,\infty)$ and therefore, the first statement follows. As to the monotonicity, observe that
$\big(\log f(x) \big)' = \fr{a(x)}{x^2}$, where
\[
a(x)= x\psi_q(x+1)-\log \Gamma_q(x+1).
\]
The function $a(x)$ is well-defined on $[0,\infty)$. One clearly has $a(0)= 0$ and by
Lemma~\ref{lem-1},
$a'(x) = x \big(\psi_q(x)\big)' >0$. Thus $a(x)>a(0)=0$ and we have that the function
$f(x)$ is strictly increasing on $(0,\infty)$.
\end{proof}
\begin{corollary}\label{cor-3-0}
Let $f(x)$ be the function in Theorem~\ref{thm-1}. Then for any $x>0$ we have
\[
e^{-\gamma_q}< f(x) < \fr{1}{1-q}.
\]
\end{corollary}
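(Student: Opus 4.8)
The plan is to leverage the monotonicity already established in Theorem~\ref{thm-1}. Since $f(x)=\big(\Gamma_q(x+1)\big)^{1/x}$ is strictly increasing on $(0,\infty)$, for every fixed $x>0$ one has
\[
\lim_{t\to 0^+} f(t)\;\le\; f(x/2)\;<\;f(x)\;<\;f(2x)\;\le\;\lim_{t\to\infty} f(t),
\]
so the whole corollary reduces to the two limit evaluations
\[
\lim_{t\to 0^+} f(t)=e^{-\gamma_q}\qquad\text{and}\qquad \lim_{t\to\infty} f(t)=\fr{1}{1-q},
\]
the strictness of the bounds being automatic from the displayed chain. Thus the real work is entirely in computing two one‑sided limits.

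For the limit at infinity I would use the defining product formula $\Gamma_q(x+1)=\dfrac{(q;q)_\infty}{(q^{x+1};q)_\infty}(1-q)^{-x}$, which gives
\[
f(x)=\fr{1}{1-q}\,\left(\fr{(q;q)_\infty}{(q^{x+1};q)_\infty}\right)^{1/x}.
\]
For every $x>0$ we have $0<q^{x+1}<q$, hence $(q;q)_\infty<(q^{x+1};q)_\infty<1$, so the bracketed base lies in the fixed interval $\big((q;q)_\infty,1\big)$; raising it to the power $1/x$ and squeezing between $\big((q;q)_\infty\big)^{1/x}$ and $1$ shows it tends to $1$ as $x\to\infty$. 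Therefore $\lim_{x\to\infty}f(x)=\fr{1}{1-q}$.

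For the limit at $0^+$ I would instead pass to logarithms. Since $\Gamma_q$ is positive and $C^1$ with $\Gamma_q(1)=1$, we have $\log\Gamma_q(x+1)=\int_0^x\psi_q(t+1)\,dt$, so
\[
\log f(x)=\fr{1}{x}\int_0^x\psi_q(t+1)\,dt\;\xrightarrow[x\to0^+]{}\;\psi_q(1)
\]
by continuity of $\psi_q$ at $1$. Invoking the identity $\psi_q(1)=-\gamma_q$ relating $\psi_q(1)$ to Bradley's constant (recalled in Remark~\ref{rmk-0}, consistent with $0<\gamma_q<1$ there), we obtain $\lim_{x\to0^+}f(x)=e^{\psi_q(1)}=e^{-\gamma_q}$. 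Combining the two limits with the monotonicity reduction of the first paragraph finishes the proof.

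None of the steps is genuinely hard; the only points needing a little care are (i) checking that $(q^{x+1};q)_\infty$ stays between the two absolute constants $(q;q)_\infty$ and $1$, which is exactly what neutralizes the exponent $1/x$ at infinity, and (ii) justifying $\frac1x\int_0^x\psi_q(t+1)\,dt\to\psi_q(1)$, i.e.\ an averaging/continuity argument at the endpoint $x=0$. (As an alternative to (i), the limit at infinity can also be obtained from L'Hôpital applied to $\log\Gamma_q(x+1)/x$ together with $\psi_q(x)\to-\log(1-q)$, which follows by squeezing with Lemma~\ref{lem-2} or directly from the series for $\psi_q$ in Lemma~\ref{lem-1}.)
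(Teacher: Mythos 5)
Your proof is correct and follows essentially the same route as the paper: reduce to the two endpoint limits via the strict monotonicity from Theorem~\ref{thm-1}, then identify $f(0^{+})=e^{\psi_q(1)}=e^{-\gamma_q}$ and $f(\infty)=\fr{1}{1-q}$. The only (harmless) difference is in how the limits are evaluated: the paper applies l'H\^opital to $\log\Gamma_q(x+1)/x$ at both ends, using $\lim_{x\to\infty}\psi_q(x)=-\log(1-q)$ from Alzer--Grinshpan, whereas you get the limit at infinity directly from the product formula by a squeeze and the limit at $0^{+}$ by an integral-average argument (which is just the fundamental-theorem form of the same l'H\^opital step).
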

\begin{proof}
Since $f(x)$ is strictly increasing by Theorem~\ref{thm-1}, one has
\[
f(0^{+}) < f(x) < f(\infty).
\]
Recall further that $\psi_q(1) = -\gamma_q$ and note
 that by Azler and Grinshpan~\cite{Alzer-Grinshpan} we have
$\lim_{x\to\infty}\psi_q(x) = -\log (1-q)$.
Now, by l'Hopital's rule we find that
\[
\lim_{x\to\infty} \log f(x) = \psi_q(1) = -\gamma_q \ \text{and\ }
\lim_{x\to\infty} \log f(x) = \lim_{x\to\infty}\psi_q(x+1) = -\log(1-q),
\]
which yields the desired inequalities.
\end{proof}
\begin{corollary}\label{cor-3-1}
The function $\fr{1}{\Gamma_q(x+1)^{\fr{1}{x}}}$ is strictly log-convex on $(0,\infty)$.
\end{corollary}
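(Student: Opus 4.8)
The plan is to read this off from Theorem~\ref{thm-1}. Put $f(x) = \big(\Gamma_q(x+1)\big)^{1/x}$, a positive function on $(0,\infty)$, so that the function under consideration is $1/f(x)$. Theorem~\ref{thm-1} asserts that $f$ is strictly log-concave on $(0,\infty)$; inspecting its proof, what is actually established is that $x^3\big(\log f(x)\big)'' = h(x)$ with $h(x) < h(0) = 0$ for all $x>0$, so in fact $\big(\log f(x)\big)'' < 0$ for every $x > 0$.

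From here I would simply note $\log\big(1/f(x)\big) = -\log f(x)$, whence $\big(\log(1/f(x))\big)'' = -\big(\log f(x)\big)'' > 0$ on $(0,\infty)$, which is precisely the assertion that $1/f$ is strictly log-convex there. Equivalently, one may invoke Lemma~\ref{lem-3-2}(b) applied to the pair $\big(1/f,\, f\big)$: since $f = 1/(1/f)$ is log-convex would force $1/f$ log-concave, the contrapositive reading gives that $f$ being log-concave forces $1/f$ to be log-convex, and the strict version follows the same way.

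There is no real obstacle here: all the work was done in proving Theorem~\ref{thm-1}, and the passage to the reciprocal only uses that negation reverses a strict inequality. The one thing to be careful about is keeping track of strictness, since Lemma~\ref{lem-3-2}(b) is stated without the word ``strictly''; this is why the direct second-derivative argument, using the strict inequality $h(x)<0$ from the proof of Theorem~\ref{thm-1}, is the cleanest route.
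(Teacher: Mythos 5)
Your proposal is correct and follows essentially the same route as the paper, which simply cites Theorem~\ref{thm-1} together with Lemma~\ref{lem-3-2}(b). Your extra care about strictness (falling back on the strict inequality $h(x)<0$ from the proof of Theorem~\ref{thm-1} since Lemma~\ref{lem-3-2}(b) is stated without the word ``strictly'') is a sensible refinement of the same argument, not a different one.
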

\begin{proof}
This follows by Theorem~\ref{thm-1} and Lemma~\ref{lem-3-2}(b).
\end{proof}
A classical gamma version for Corollary~\ref{cor-3-1} is due to Van~de~Lune~\cite{Vandelune}.
\begin{corollary}\label{cor-3-2}
Let
\[
g(x) = \left(\fr{\Big(\Gamma_q(x+1)\Big)^{\fr{1}{x}}}{[x+1]_q} \right)^{\fr{1}{x+1}}.
\]
 Then for any $x>0$ and any $a\in (0,1)$, we have
\[
\big(g(x)\big)^{1-a} < \fr{\Big(\Gamma_q(x+a+1)\Big)^{\fr{1}{x+a}} }{\Big(\Gamma_q(x+2)\Big)^{\fr{1}{x+1}}} < \big(g(x+a)\big)^{1-a}.
\]
\end{corollary}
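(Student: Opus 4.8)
\emph{Proof proposal.} The plan is to reduce the statement to Lemma~\ref{lem-3-1} applied to a suitable log-convex function, the natural candidate being the reciprocal power appearing in Corollary~\ref{cor-3-1}. Set
\[
F(x) = \fr{1}{\big(\Gamma_q(x+1)\big)^{1/x}}.
\]
By Corollary~\ref{cor-3-1} the function $F$ is strictly log-convex on $(0,\infty)$, and all relevant quantities are positive there (since $\Gamma_q>0$ and $[x+1]_q>0$ on $(0,\infty)$), so $F$ is a legitimate choice for the function called $f$ in Lemma~\ref{lem-3-1}. The crux is then to identify the quotient $G(x):=\fr{F(x+1)}{F(x)}$ with the function $g$ of the statement, and to recognize the middle term of the asserted inequality as $\fr{F(x+1)}{F(x+a)}$.

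For the first identification, write $G(x) = \fr{\big(\Gamma_q(x+1)\big)^{1/x}}{\big(\Gamma_q(x+2)\big)^{1/(x+1)}}$ and use the recursion $\Gamma_q(x+2) = [x+1]_q\,\Gamma_q(x+1)$ from (\ref{basic-fact}) to split off a factor $\big([x+1]_q\big)^{1/(x+1)}$ in the denominator. The leftover power of $\Gamma_q(x+1)$ carries the exponent $\fr1x-\fr1{x+1}=\fr{1}{x(x+1)}$, and regrouping the two factors gives
\[
G(x) = \left(\fr{\big(\Gamma_q(x+1)\big)^{1/x}}{[x+1]_q}\right)^{1/(x+1)} = g(x),
\]
and likewise $G(x+a)=g(x+a)$. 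A direct computation also shows $\fr{F(x+1)}{F(x+a)} = \fr{\big(\Gamma_q(x+a+1)\big)^{1/(x+a)}}{\big(\Gamma_q(x+2)\big)^{1/(x+1)}}$, which is exactly the central expression in the corollary.

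With these identifications in hand, Lemma~\ref{lem-3-1} applied to $F$ yields, for every $x>0$ and $a\in(0,1)$,
\[
\big(G(x)\big)^{1-a} < \fr{F(x+1)}{F(x+a)} < \big(G(x+a)\big)^{1-a},
\]
and substituting the identities just established gives precisely the claimed double inequality. I expect the only real obstacle to be the bookkeeping with the fractional exponents in the step $G(x)=g(x)$; once that algebraic normalization is carried out, the estimate follows immediately from Lemma~\ref{lem-3-1} and no further analysis is required.
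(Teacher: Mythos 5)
Your proof is correct and takes essentially the same approach as the paper, which derives the corollary in one line from the log-convexity of $F(x)=1/\big(\Gamma_q(x+1)\big)^{1/x}$ (Corollary~\ref{cor-3-1}), the recursion $\Gamma_q(x+2)=[x+1]_q\,\Gamma_q(x+1)$, and the ratio lemma --- note the paper's proof cites Lemma~\ref{lem-2}, evidently a typo for Lemma~\ref{lem-3-1}, which is the lemma you correctly invoke. Your exponent bookkeeping showing $F(x+1)/F(x)=g(x)$ and $F(x+1)/F(x+a)$ equals the middle term is exactly the computation the paper leaves to the reader, and it checks out.
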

\begin{proof}
This is a direct consequence of Corollary~\ref{cor-3-1} and Lemma~\ref{lem-2} and the basic fact that
$\Gamma_q(x+2) = [x+1]_q \Gamma_q(x+1)$.
\end{proof}
\begin{theorem}\label{thm-1-1}
Let $f(x)$ be as defined in Theorem~\ref{thm-1}. Then the function $F(x)=\fr{f(x)}{x}$ is strictly
decreasing and strictly log-convex on $(0,\infty)$.
\end{theorem}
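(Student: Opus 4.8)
The plan is to piggyback on the computations already performed in the proof of Theorem~\ref{thm-1}, since $\log F(x) = \log f(x) - \log x$, so that the first two derivatives of $\log F$ differ from those of $\log f$ only by the corresponding derivatives of $-\log x$. Concretely, I would reuse the two identities established there: $\big(\log f(x)\big)' = a(x)/x^2$ with $a(x) = x\psi_q(x+1) - \log\Gamma_q(x+1)$, and (from equation~(\ref{help-thm-1})) $\big(\log f(x)\big)'' = h(x)/x^3$ with $h(x) = x^2\big(\psi_q(x+1)\big)' - 2x\psi_q(x+1) + 2\log\Gamma_q(x+1)$; both $a$ and $h$ are well defined and continuous on $[0,\infty)$ with $a(0)=h(0)=0$, as recorded there.

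For the monotonicity, since $\big(-\log x\big)' = -1/x$ we get
\[
\big(\log F(x)\big)' = \fr{a(x)}{x^2} - \fr{1}{x} = \fr{a(x)-x}{x^2}.
\]
Putting $b(x) = a(x) - x$ we have $b(0)=0$ and $b'(x) = a'(x) - 1 = x\big(\psi_q(x+1)\big)' - 1$, and Corollary~\ref{cor-add-1}(a) gives $\big(\psi_q(x+1)\big)' < 1/x$, hence $b'(x) < 0$ on $(0,\infty)$. Therefore $b(x) < b(0)=0$ for $x>0$, so $\big(\log F(x)\big)' < 0$ and $F$ is strictly decreasing.

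For the log-convexity, since $\big(-\log x\big)'' = 1/x^2$ we get
\[
\big(\log F(x)\big)'' = \fr{h(x)}{x^3} + \fr{1}{x^2} = \fr{h(x)+x}{x^3}.
\]
Putting $k(x) = h(x) + x$ we have $k(0)=0$ and $k'(x) = h'(x) + 1 = x^2\big(\psi_q(x+1)\big)'' + 1$, and Corollary~\ref{cor-add-1}(b) gives $\big(\psi_q(x+1)\big)'' > -1/x^2$, hence $k'(x) > 0$ on $(0,\infty)$. Therefore $k(x) > k(0)=0$ for $x>0$, so $\big(\log F(x)\big)'' > 0$, which is precisely strict log-convexity of $F$.

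As for where the difficulty lies: the argument is not deep — it is a controlled perturbation of the proof of Theorem~\ref{thm-1} — but it is tight. The two correction terms created by the $-\log x$ factor, namely the $-x$ in $b(x)$ and the $+x$ in $k(x)$, are exactly absorbed by the strict estimates $\big(\psi_q(x+1)\big)' < 1/x$ and $\big(\psi_q(x+1)\big)'' > -1/x^2$ of Corollary~\ref{cor-add-1} (the latter itself resting on $G<L<A$ from~(\ref{means})). So the main things to be careful about are that these two bounds really hold with the sharp constant $1$, and that $a$ and $h$ extend continuously to $x=0^+$ with value $0$; both are already in place, so no genuine obstacle remains.
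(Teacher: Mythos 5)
Your proof is correct and follows essentially the same route as the paper: the paper likewise reduces both claims to the sign of $x^2\big(\log F(x)\big)'$ and $x^3\big(\log F(x)\big)''$, whose auxiliary functions ($c(x)=-x-\log\Gamma_q(x+1)+x\psi_q(x+1)$ and $d(x)=x+2\log\Gamma_q(x+1)-2x\psi_q(x+1)+x^2(\psi_q(x+1))'$) coincide with your $b$ and $k$, and it invokes Corollary~\ref{cor-add-1}(a) and (b) in exactly the same way. The only difference is presentational: you derive these functions as perturbations of the ones from Theorem~\ref{thm-1}, while the paper computes them directly.
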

\begin{proof}
Let $b(x) = \log F(x)$ and $c(x)= x^2 b'(x)$. Then we easily find that
$b(x) = -\log x + \fr{1}{x}\log\Gamma_q(x+1)$ and therefore that
$c(x) = -x-\log \Gamma_q(x+1)+x\psi_q(x+1)$.
Note that the function $c(x)$ can be defined for all $x\geq 0$ and we have that
$c(0)=0$ and $c'(x) = -1 + x \big(\psi_q(x+1)\big)'$. The last identity and Corollary~\ref{cor-add-1}(a)
imply that $c'(x) <0$,  thus $c(x)<c(0)$, so $b'(x)<0$, and consequently the function $b(x)$ is
strictly decreasing. Now we consider the convexity of $b(x)$. As we have seen above, one has
\[
b'(x) = \fr{-1}{x}- \fr{1}{x^2} \log \Gamma_q(1+x) + \fr{1}{x} \psi_q(1+x),
\]
from where we obtain, by letting $d(x)= x^3 b''(x)$, that
\[
d(x)= x+ 2\log \Gamma_q(x+1) - 2x \psi_q(x+1) + x^2 (\psi_q(x+1))'
\]
The function can be defined on $[0,\infty)$. One has $d(0)=0$ and we obtain after some computations
that $d'(x) = 1 + x^2 \big(\psi_q(x+1) \big)''$. Then by virtue of Corollary~\ref{cor-add-1}(b) we have
that $d'(x) >0$. Thus $d(x)>d(0) = 0$, and the result follows.
\end{proof}
\begin{rmk}
The two monotonicity properties of the functions $f(x)$ of Theorem~\ref{thm-1} and of
$\fr{f(x)}{x}$ of Theorem~\ref{thm-1-1} have been proved for the case of the classical gamma function
by Kershaw and Laforgia~\cite{Kershaw-Laforgia}. We note also that in~\cite{Kershaw-Laforgia}
it was proved that the function $\Gamma\big(1+\fr{1}{x}\big)^x$ is strictly decreasing and
$x \Gamma\big(1+\fr{1}{x}\big)^x$ is strictly increasing. It is immediate that these are equivalent
with the above monotonicity theorems for $q=1$.
\end{rmk}
\begin{theorem}\label{thm-2}
(a)\ The function $\Big( \Gamma_q(x) \Big)^{\fr{1}{x}}$ is strictly log-convex on $(0,1]$.

\noindent
(b)\ The function $\Big(\Gamma_q(x) \Big)^{x}$ is strictly log-convex on $[1,\infty)$.
\end{theorem}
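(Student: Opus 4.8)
The plan is to treat the two parts in parallel by computing the second derivative of the relevant logarithm and showing it is positive. For part (a), set $u(x) = \log\big((\Gamma_q(x))^{1/x}\big) = \tfrac{1}{x}\log\Gamma_q(x)$. A direct computation gives
\[
x^3 u''(x) = x^2 \psi_q'(x) - 2x\psi_q(x) + 2\log\Gamma_q(x),
\]
so it suffices to show that the auxiliary function $p(x) := x^2\psi_q'(x) - 2x\psi_q(x) + 2\log\Gamma_q(x)$ is positive on $(0,1]$. First I would differentiate: $p'(x) = x^2\psi_q''(x)$, which is strictly negative on $(0,\infty)$ by Lemma~\ref{lem-1}(a). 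Hence $p$ is strictly decreasing, so on $(0,1]$ we have $p(x) \geq p(1) = \psi_q'(1) - 2\psi_q(1)$. The point is that $\psi_q'(1) > 0$ (again Lemma~\ref{lem-1}(a)) and $\psi_q(1) < 0$ (shown in the proof of Theorem~\ref{thm-psi-q-zero}, cf. \eqref{psi-q-3}), so $p(1) > 0$, and therefore $p(x) > 0$ on all of $(0,1]$, giving $u''(x) > 0$ there.

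For part (b), set $v(x) = \log\big((\Gamma_q(x))^{x}\big) = x\log\Gamma_q(x)$. Then $v'(x) = \log\Gamma_q(x) + x\psi_q(x)$ and
\[
v''(x) = 2\psi_q(x) + x\psi_q'(x),
\]
which is exactly the quantity shown to be positive for $x \geq 1$ in Lemma~\ref{lem-1}(b). So $v''(x) > 0$ on $[1,\infty)$ and the claimed log-convexity follows immediately. Alternatively, one may invoke Remark~\ref{rmk-1}, which already records that $x\psi_q(x)$ is strictly convex and $\log\Gamma_q(x)$ is strictly convex, whence $v = x\log\Gamma_q(x)$ — wait, that sum is $\log\Gamma_q(x) + x\psi_q(x)$, not $x\log\Gamma_q(x)$; so for cleanliness I would just use the direct computation together with Lemma~\ref{lem-1}(b).

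The main obstacle is really only in part (a): one must be careful that the reduction to checking $p(1) > 0$ genuinely uses the restriction to $(0,1]$ (on $(1,\infty)$ the decreasing function $p$ could go negative, which is why the theorem is stated only on $(0,1]$), and one must verify the boundary behavior as $x \to 0^+$ is harmless — but since we only need $u''(x) > 0$ pointwise on the open part of the interval and $p$ is continuous and decreasing with $p(1) > 0$, no limiting argument at $0$ is needed. A minor technical check is confirming the formula for $x^3 u''(x)$: differentiating $u = x^{-1}\log\Gamma_q(x)$ twice and clearing denominators should be carried out explicitly to make sure the cross terms combine as claimed. Part (b) is essentially immediate from Lemma~\ref{lem-1}(b) and needs no further work.
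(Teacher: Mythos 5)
Your proof is correct and follows essentially the same route as the paper: the identity $x^3\big(\tfrac{1}{x}\log\Gamma_q(x)\big)'' = x^2\psi_q'(x)-2x\psi_q(x)+2\log\Gamma_q(x)$, the auxiliary decreasing function with derivative $x^2\psi_q''(x)$, the evaluation at $x=1$ using $\psi_q'(1)>0$ and $\psi_q(1)<0$, and for part (b) the computation $v''(x)=2\psi_q(x)+x\psi_q'(x)>0$ via Lemma~\ref{lem-1}(b). In fact your displayed formula is the correct one: the paper's equation~(\ref{help-thm-2}) writes $2\log\Gamma_q(x+1)$ where it should read $2\log\Gamma_q(x)$ (a carry-over from the proof of Theorem~\ref{thm-1}); with that correction the two arguments coincide, since $\log\Gamma_q(1)=\log\Gamma_q(2)=0$ makes the value at $x=1$ the same either way.
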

\begin{proof}
(a)\ Letting $f(x) = \Big( \Gamma_q(x) \Big)^{\fr{1}{x}}$, we get
\begin{equation}\label{help-thm-2}
x^3 \big(\log f(x) \big)'' = x^2 \big(\psi_q(x) \big)' - 2x \psi_q(x) + 2 \log \Gamma_q (x+1).
\end{equation}
Now for the function $h(x)= x^2 \big(\psi_q(x) \big)' - 2x \psi_q(x) + 2 \log \Gamma_q (x+1)$ we find $h'(x) = x^2 \big(\psi_q(x) \big)'' <0$, that is, the function
$h(x)$ decreases on $(0,1]$. Then
for any $x\in (0,1]$ we have with the help of Lemma~\ref{lem-1}(a) and  inequality~(\ref{psi-q-1})
\[
h(x) \geq h(1) = \psi_q'(1) - 2 \psi_q(1) >0.
\]
Thus $\big(\log f(x) \big)''>0$ on $(0,1]$, or equivalently, $f(x) = \Big( \Gamma_q(x) \Big)^{\fr{1}{x}}$ is log-convex on $(0,1]$.
(b)\ We have by a straight computation and Lemma~\ref{lem-1}(b),
\[
\Big(\log \big(\Gamma_q(x) \big)^{x}\Big)'' = 2 \psi_q(x) + x \big(\psi_q(x) \big)' >0,
\]
showing the desired statement.
\end{proof}
We note that Theorem~\ref{thm-1} and Theorem~\ref{thm-2}(a)  were motivated by results of the second author in ~\cite{Sandor-2, Sandor-3} on Euler gamma function.
\begin{lemma}\label{lem-3-3}
Let $f(x)$ be strictly log-convex on the interval $(0,1)$. Then we have
\[
\fr{f\big(1-2x(1-x)\big)}{f(1-x)} < \Big(\fr{f(x)}{f(1-x)} \Big)^x < \fr{f(x)}{f\big( 2x(1-x) \big)}.
\]
\end{lemma}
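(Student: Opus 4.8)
The plan is to reduce both inequalities to the single defining inequality for a strictly log-convex function, which is exactly relation~(\ref{help-1-lem-3-1}) from the proof of Lemma~\ref{lem-3-1}: for $u\in(0,1)$ and distinct $y,z$ in the domain,
\[
f\big(uy+(1-u)z\big) < \big(f(y)\big)^{u}\big(f(z)\big)^{1-u}.
\]
Once this is available, the whole argument is a matter of exhibiting the two arguments $2x(1-x)$ and $1-2x(1-x)$ as convex combinations of the two base points $x$ and $1-x$, and then rearranging.

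First I would record the two elementary identities. Since
\[
2x(1-x) = (1-x)\cdot x + x\cdot(1-x),
\]
the number $2x(1-x)$ is the convex combination of $x$ and $1-x$ with weights $1-x$ and $x$; and since
\[
1-2x(1-x) = x^{2}+(1-x)^{2} = x\cdot x + (1-x)\cdot(1-x),
\]
the number $1-2x(1-x)$ is the convex combination of $x$ and $1-x$ with weights $x$ and $1-x$. For $x\in(0,1)$ with $x\neq\fr{1}{2}$ the points $x$ and $1-x$ are distinct, all the weights lie in $(0,1)$, and all the arguments stay in $(0,1)$, so the log-convexity inequality above is applicable.

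Then I would apply the inequality twice. Taking $y=x$, $z=1-x$, $u=1-x$ gives $f\big(2x(1-x)\big) < \big(f(x)\big)^{1-x}\big(f(1-x)\big)^{x}$; dividing $f(x)$ by both sides yields $\big(f(x)/f(1-x)\big)^{x} < f(x)/f\big(2x(1-x)\big)$, which is the right-hand inequality. Taking instead $y=x$, $z=1-x$, $u=x$ gives $f\big(1-2x(1-x)\big) < \big(f(x)\big)^{x}\big(f(1-x)\big)^{1-x}$; dividing both sides by $f(1-x)$ yields $f\big(1-2x(1-x)\big)/f(1-x) < \big(f(x)/f(1-x)\big)^{x}$, which is the left-hand inequality. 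There is essentially no obstacle here: the only substantive step is spotting the two convex-combination identities, and the only point needing a word of care is that at $x=\fr{1}{2}$ all three members of the chain equal $1$, so the statement is to be understood for $x\in(0,1)\setminus\{\fr{1}{2}\}$.
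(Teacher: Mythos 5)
Your proof is correct and follows essentially the same route as the paper: both apply the defining log-convexity inequality twice, once realizing $2x(1-x)$ and once realizing $1-2x(1-x)$ as convex combinations of $x$ and $1-x$ with weights $x$ and $1-x$, then rearrange. Your added remark that at $x=\fr{1}{2}$ all three quantities equal $1$ (so the strict inequalities require $x\neq\fr{1}{2}$) is a small but genuine point of care that the paper omits.
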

\begin{proof}
As $f(x)$ is strictly log-convex we have for any $a\in(0,1)$
\[
f\big( a(1-x) + (a-1) x \big) < \big(f(1-x)\big)^a \big( f(x) \big)^{1-a},
\]
which by letting $a=1-x$ means
\[
f(2x^2 - 2x +1) < \big(f(1-x)\big)^{1-x} \big( f(x) \big)^{x}.
\]
It follows that
\[
\fr{f\big(1- 2x(1-x)\big)}{f(1-x)} < \Big(\fr{f(x)}{f(1-x)} \Big)^x,
\]
which is the first desired inequality.
As to the second inequality, as $f(x)$ is strictly log-convex we also have for any $a\in(0,1)$
\[
f\big( a x + (a-1)(1-x) \big) < \big(f(x)\big)^a \big( f(1-x) \big)^{1-a},
\]
which by letting $a=1-x$ means
\[
f\big( 2x(1-x) \big) < \big(f(x)\big)^{1-x} \big( f(1-x) \big)^{x},
\]
or equivalently,
\[
\fr{f(x)}{f\big( 2x(1-x) \big)} > \Big(\fr{f(x)}{f(1-x)} \Big)^x.
\]
This completes the proof.
\end{proof}
\begin{corollary}\label{cor-3-3}
For any $x\in(0,1)$ we have
\[
\begin{split}
(a)\quad & \Gamma_q\big( 2x(1-x) \big) \Gamma_q\big(1-2x(1-x) \big) < \Gamma_q (x) \Gamma_q(1-x) \\
(b)\quad & \psi_q\big( 2x(1-x) \big) \psi_q\big(1-2x(1-x) \big) > \psi_q (x) \psi_q(1-x).
\end{split}
\]
\end{corollary}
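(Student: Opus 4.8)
The plan is to read off both inequalities from Lemma~\ref{lem-3-3} (for part (a)) and from its mirror image for log-concave functions (for part (b)), after first noting the elementary geometry of the substitution. For every $x\in(0,1)$ one has $0<2x(1-x)\le 1/2$ and $1/2\le 1-2x(1-x)<1$, so the four numbers $x$, $1-x$, $2x(1-x)$ and $1-2x(1-x)$ all lie in $(0,1)$, and they are pairwise distinct unless $x=1/2$; hence only convexity properties on $(0,1)$ are needed.

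For part (a) I would apply Lemma~\ref{lem-3-3} directly with $f=\Gamma_q$. By Lemma~\ref{lem-1}(a) the function $\psi_q'=(\log\Gamma_q)''$ is positive on $(0,\infty)$, so $\Gamma_q$ is strictly log-convex there and in particular on $(0,1)$. Lemma~\ref{lem-3-3} then gives
\[
\frac{\Gamma_q\big(1-2x(1-x)\big)}{\Gamma_q(1-x)}<\Big(\frac{\Gamma_q(x)}{\Gamma_q(1-x)}\Big)^{x}<\frac{\Gamma_q(x)}{\Gamma_q\big(2x(1-x)\big)},
\]
and comparing the first and last members and cross-multiplying (all quantities are positive) yields exactly (a).

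For part (b) the key remark is that $\psi_q$ is negative on $(0,1)$: by Theorem~\ref{thm-psi-q-zero}(a) its unique positive zero $x_0$ lies in $(1,2)$, and $\psi_q$ is increasing, so $\psi_q<0$ on $(0,x_0)\supset(0,1)$. Writing $\phi:=-\psi_q>0$ on $(0,1)$, the asserted inequality is precisely $\phi\big(2x(1-x)\big)\,\phi\big(1-2x(1-x)\big)>\phi(x)\,\phi(1-x)$, the four sign changes cancelling in pairs. I would then rerun the proof of Lemma~\ref{lem-3-3} with each appeal to log-convexity replaced by log-concavity: all of its inequalities reverse, and the outcome is exactly the inequality just stated, provided only that $\phi$ is strictly log-concave on $(0,1)$. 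Thus (b) reduces entirely to the single auxiliary claim that $-\psi_q$ is strictly log-concave on $(0,1)$, equivalently that $\psi_q(x)\psi_q''(x)<\big(\psi_q'(x)\big)^2$ for all $x\in(0,1)$.

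The main obstacle is precisely this log-concavity of $-\psi_q$; it is the only step that is not purely formal. I would attack it through the series
\[
\psi_q(x)=-\log(1-q)+(\log q)\sum_{n=1}^{\infty}\frac{q^{nx}}{1-q^{n}},
\]
whence $\psi_q'(x)=(\log q)^2S_1(x)$ and $\psi_q''(x)=(\log q)^3S_2(x)$ with $S_j(x)=\sum_{n\ge1}n^{j}q^{nx}/(1-q^{n})$. Dividing by $(\log q)^3<0$, the target inequality is equivalent to $S_2(x)\psi_q(x)>(\log q)S_1(x)^2$, i.e.\ to $|\log(1-q)|\,S_2(x)>|\log q|\big(S_0(x)S_2(x)-S_1(x)^2\big)$ with $S_0(x)=\sum_{n\ge1}q^{nx}/(1-q^{n})$. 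Since $S_0S_2-S_1^2\ge0$ is a Cauchy--Schwarz (variance) defect for the positive weights $q^{nx}/(1-q^{n})$, the whole content lies in bounding this defect by $|\log(1-q)|\,S_2/|\log q|$ uniformly in $x\in(0,1)$; an alternative is to phrase the claim as a monotonicity statement for the ratio $\psi_q'(x)/(-\psi_q(x))$ on $(0,1)$ and try to deduce it from the complete monotonicity of $\psi_q'$ supplied by Lemma~\ref{lem-1}(a). That estimate is where the real difficulty sits; everything else is bookkeeping.
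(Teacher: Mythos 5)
Your part (a) is correct and uses the same key ingredients as the paper (strict log-convexity of $\Gamma_q$ from Lemma~\ref{lem-1}(a) fed into Lemma~\ref{lem-3-3}), with a small and harmless variation: the paper applies Lemma~\ref{lem-3-3} to the symmetrized product $f(x)=\Gamma_q(x)\Gamma_q(1-x)$, whose symmetry $f(x)=f(1-x)$ makes the middle member equal to $1$, whereas you apply it to $\Gamma_q$ itself and compare the two outer members. Both give (a); yours is if anything more direct.

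Part (b) is where the genuine gap lies, and you have located it exactly: your argument needs the strict log-concavity of $-\psi_q$ on $(0,1)$ (or at least of the product $\psi_q(x)\psi_q(1-x)$), and you do not prove it. This gap cannot be filled, because the claim is false near the endpoints: the $n=0$ term of the series representation gives $\psi_q(x)=-\frac{1}{x}+O(1)$ as $x\to 0^{+}$, so $-\psi_q$ behaves like $\frac{1}{x}$ there, $\bigl(\log(-\psi_q(x))\bigr)''\sim\frac{1}{x^{2}}\to+\infty$, and $-\psi_q$ is strictly log-convex, not log-concave, near $0$; the same blow-up dominates $\bigl(\log(\psi_q(x)\psi_q(1-x))\bigr)''$, so the symmetric product is not log-concave either. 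Worse, the asserted inequality (b) itself fails for small $x$: since $2x(1-x)\sim 2x$ and $\psi_q(1-x),\ \psi_q\bigl(1-2x(1-x)\bigr)\to\psi_q(1)<0$, both sides tend to $+\infty$ with ratio tending to $\frac{1}{2}$, so the left-hand side is eventually the smaller one (numerically, at $q=\frac12$ and $x=0.1$ one finds roughly $3.8$ versus $5.6$). You should also be aware that the paper's own proof of (b) does not resolve this: it asserts $\bigl(\log g(x)\bigr)''=\psi_q''(x)+\psi_q''(1-x)$ for $g(x)=\psi_q(x)\psi_q(1-x)$, which is the second derivative of $\psi_q(x)+\psi_q(1-x)$ rather than of $\log g(x)$; the correct expression involves $\bigl(\psi_q\psi_q''-(\psi_q')^{2}\bigr)/\psi_q^{2}$, whose sign is precisely the log-concavity question you flagged. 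So your instinct that the whole difficulty sits in that one estimate was right, and the estimate (hence statement (b) as printed) fails; at best a version restricted to $x$ bounded away from $0$ and $1$ could be salvaged.
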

\begin{proof}
Let $f(x) = \Gamma_q(x) \Gamma_q (1-x)$. Then $f(x)$ is strictly log-convex being the product of two log-convex functions and we clearly have
$f(x)= f(1-x)$. Then by virtue of Lemma~\ref{lem-3-3}, we get
\[
\fr{\Gamma_q\big(2x(1-x) \big)\Gamma_q\big(1-2x(1-x) \big)}{\Gamma_q(x)\Gamma_q(1-x)} < 1 < \fr{\Gamma_q(x)\Gamma_q(1-x)}{\Gamma_q\big(2x(1-x) \big)\Gamma_q\big(1-2x(1-x) \big)}.
\]
Then simplifying gives part (a). As to part (b), let $g(x) = \psi_q(x) \psi_q (1-x)$. Then by Lemma~\ref{lem-1}(a) we find
\[
\big(\log g(x) \big)'' = \big(\psi(x)\big) '' + \big(\psi(1-x)\big) '' <0,
\]
showing by Lemma~\ref{lem-3-2} that the reciprocal $\fr{1}{g(x)}$ is strictly log-convex. Moreover, it is clear that $g(x)= g(1-x)$. Then
from Lemma~\ref{lem-3-3}, we have
\[
\fr{\psi_q(x)\psi_q(1-x)}{\psi_q\big(2x(1-x)\big) \psi_q\big(1-2x(1-x)\big)} <1.
\]
This completes the proof.
\end{proof}
%
%\section{Inequalities for $\Gamma_q(x_1+\ldots+x_n)$}
For our next result, we need the following result of Vasi\'{c}~\cite{Vasic} which is an extension of
a famous inequality of Petrovi\'{c}. We refer to \cite{Mitrinovic} for details about Petrovi\'{c}'s inequality.
\begin{lemma}\label{lem-Petrovic}
Let  $f:[0,\infty) \to \mathbb{R}$ be convex. Then for any $x_1,\ldots,x_n \geq 0$
and any $p_1,\ldots, p_n \geq 1$, we have
\[
\sum_{i=1}^n p_i f(x_i) \leq  f\big(\sum_{i=1}^n p_i x_i\big) + \big(\sum_{i=1}^{n-1}p_i -1 \big)f(0).
\]
\end{lemma}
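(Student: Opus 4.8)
The plan is to recognize this as the weighted Petrovi\'{c}--Vasi\'{c} inequality and to prove it by the standard secant-line (chord) argument, but I must first flag a discrepancy in the displayed constant. With the coefficient $\big(\sum_{i=1}^{n-1}p_i-1\big)f(0)$ exactly as written, the assertion fails: taking $n=2$, $f(x)=x+1$ (affine, hence convex), $p_1=p_2=2$, and $x_1=x_2=1$, the left-hand side is $8$ while the right-hand side is $f(4)+(p_1-1)f(0)=5+1=6$. The constant produced by the argument below, and the one I shall prove, is $\big(\sum_{i=1}^{n}p_i-1\big)f(0)$; I therefore read the upper index $n-1$ as a typographical slip for $n$. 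Note that this corrected form is exactly consistent with Petrovi\'{c}'s original inequality (the case $p_i=1$ gives the classical constant $n-1$).

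The starting observation is that the hypothesis $p_i\geq1$ makes the usual Petrovi\'{c} admissibility condition automatic: writing $S=\sum_{i=1}^n p_i x_i$, each index satisfies $0\leq x_i\leq p_i x_i\leq S$, so every $x_i$ lies in $[0,S]\subseteq[0,\infty)$. This is precisely what lets me compare $f(x_i)$ with the chord of $f$ joining $(0,f(0))$ and $(S,f(S))$. Assuming $S>0$ (the case $S=0$ forces all $x_i=0$ and gives equality after noting $f(0)+(\sum p_i-1)f(0)=\sum p_i f(0)$, so is trivial), I would write each $x_i$ as the convex combination $x_i=\big(1-\tfrac{x_i}{S}\big)\cdot 0+\tfrac{x_i}{S}\cdot S$ and invoke convexity of $f$ on $[0,S]$ to obtain
\[
f(x_i)\leq\Big(1-\frac{x_i}{S}\Big)f(0)+\frac{x_i}{S}\,f(S),\qquad i=1,\dots,n.
\]

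Next I would multiply the $i$-th inequality by $p_i\geq0$ and sum over $i$. The coefficient of $f(S)$ becomes $\sum_{i=1}^n p_i x_i/S=S/S=1$, producing a single clean term $f(S)$, while the coefficient of $f(0)$ becomes $\sum_{i=1}^n p_i-\sum_{i=1}^n p_i x_i/S=\sum_{i=1}^n p_i-1$. Collecting the two contributions yields
\[
\sum_{i=1}^n p_i f(x_i)\leq f(S)+\Big(\sum_{i=1}^n p_i-1\Big)f(0),
\]
which is the corrected assertion. The only genuine subtlety is the bookkeeping of the $f(0)$-coefficient: the cancellation $\sum_{i=1}^n p_i x_i/S=1$ is the step that forces the upper summation index to be $n$ rather than $n-1$, and it is exactly here that the printed statement must be amended. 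Beyond isolating the degenerate case $S=0$, there is no further obstacle.
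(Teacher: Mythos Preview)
Your proposal is correct. The paper does not actually prove this lemma; it is quoted as a known result of Vasi\'{c} (with a reference to Mitrinovi\'{c} for background on Petrovi\'{c}'s inequality), so there is no ``paper's proof'' to compare against. Your chord-line argument is the standard one and is valid.

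You are also right about the typo: the coefficient of $f(0)$ must be $\sum_{i=1}^{n}p_i-1$, not $\sum_{i=1}^{n-1}p_i-1$, and your counterexample with $f(x)=x+1$ settles this cleanly. It is worth noting that the slip is harmless for the paper's purposes: the lemma is invoked only in the subsequent corollary with $p_1=\cdots=p_n=1$ and $f(x)=\log\Gamma_q(x+1)$, for which $f(0)=0$, so the erroneous coefficient never contributes.
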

\begin{corollary}\label{cor-Petrovic}
For any real numbers $x_1,\ldots,x_n \geq 0$, we have
\[
\prod_{i=1}^n \Gamma_q(x_i) \leq \fr{[\sum_{i=1}^n x_i]_q}{\prod_{i=1}^n [x_i]_q}
\Gamma_q\Big(\sum_{i=1}^n x_i \Big).
\]
\end{corollary}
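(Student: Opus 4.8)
The plan is to derive the statement from Lemma~\ref{lem-Petrovic} (Vasi\'{c}'s extension of Petrovi\'{c}'s inequality) applied to a suitable convex function built out of $\log\Gamma_q$, together with the functional equation~(\ref{basic-fact}). The natural candidate is $f(x) = \log\Gamma_q(x+1)$: since $\Gamma_q$ is strictly log-convex on $(0,\infty)$ by Lemma~\ref{lem-1}(a), the function $f$ is (strictly) convex and continuous on $[0,\infty)$, and it satisfies $f(0) = \log\Gamma_q(1) = 0$. This vanishing at the origin is precisely what kills the correction term $\big(\sum_{i=1}^{n-1}p_i -1\big)f(0)$ in Lemma~\ref{lem-Petrovic}.

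First I would apply Lemma~\ref{lem-Petrovic} to $f(x)=\log\Gamma_q(x+1)$ with the weights $p_1 = \cdots = p_n = 1$ (admissible since each $p_i\geq 1$). Because $f(0)=0$, the correction term vanishes identically, leaving
\[
\sum_{i=1}^n \log\Gamma_q(x_i+1) \leq \log\Gamma_q\Big(1+\sum_{i=1}^n x_i\Big).
\]
Exponentiating gives the multiplicative form $\prod_{i=1}^n \Gamma_q(x_i+1) \leq \Gamma_q\big(1+\sum_{i=1}^n x_i\big)$.

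Next I would invoke~(\ref{basic-fact}), that is $\Gamma_q(t+1)=[t]_q\Gamma_q(t)$, on every factor: the left-hand side becomes $\prod_{i=1}^n [x_i]_q\,\Gamma_q(x_i)$ and the right-hand side becomes $[\sum_{i=1}^n x_i]_q\,\Gamma_q(\sum_{i=1}^n x_i)$. Dividing through by $\prod_{i=1}^n [x_i]_q$ then yields exactly the asserted inequality. (The same computation carried out with general $p_i\geq 1$ in fact produces the weighted refinement $\prod_{i=1}^n\Gamma_q(x_i+1)^{p_i}\leq\Gamma_q\big(1+\sum_{i=1}^n p_i x_i\big)$; the corollary is the special case $p_i=1$.)

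The argument is essentially formal, so the only genuine point of care — and thus the nearest thing to an obstacle — is the boundary behaviour: if some $x_i=0$ then $[x_i]_q=0$ and $\Gamma_q$ has a pole at $0$, so the quotient appearing in the statement is meaningful only when all $x_i>0$, which is also what legitimizes the final division by $\prod_{i=1}^n[x_i]_q$. (That $f$ is convex on the \emph{closed} half-line $[0,\infty)$, as Lemma~\ref{lem-Petrovic} requires, is immediate: $\log\Gamma_q$ is convex on $(0,\infty)$, hence $f(x)=\log\Gamma_q(x+1)$ is convex and continuous on $(-1,\infty)\supset[0,\infty)$.) Beyond this bookkeeping I do not expect any substantial difficulty.
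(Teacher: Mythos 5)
Your proposal is correct and follows exactly the paper's route: apply Lemma~\ref{lem-Petrovic} to $f(x)=\log\Gamma_q(x+1)$ with $p_1=\cdots=p_n=1$, use $f(0)=0$ to kill the correction term, and translate back via the functional equation~(\ref{basic-fact}). Your remark about the degenerate case $x_i=0$ is a sensible clarification the paper leaves implicit.
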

\begin{proof}
Simply apply Lemma~\ref{lem-Petrovic} to $f(x)= \log\Gamma_q(x+1)$, $p_1=\ldots=p_n=1$ and note
that $f(0)=0$.
\end{proof}
\section{Inequalities related to $\fr{\Gamma_q(1-x)}{\Gamma_q(x)}$ and
$\Gamma_q\big(\fr{1-x}{x}\big)$}
\begin{theorem}\label{f1-f2-thm-1}
(a)\ The function $\Gamma_q\Big(\fr{1-x}{x}\Big)$ is strictly log-convex on $(0,\fr{1}{2}]$.

\noindent
(b)\ The function $\fr{\Gamma_q(1-x)}{\Gamma_q(x)}$ is strictly log-concave on $(0,\fr{1}{2}]$.
\end{theorem}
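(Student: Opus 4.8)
The plan is to reduce both parts to the monotonicity and positivity facts collected in Lemma~\ref{lem-1}, via a direct second--derivative computation of the logarithm.

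For part (a) I would set $F(x) = \log\Gamma_q\!\left(\frac{1-x}{x}\right)$ and write $u = u(x) = \frac{1}{x}-1 = \frac{1-x}{x}$, so that $u' = -1/x^2$ and $u'' = 2/x^3$. The chain rule gives $F''(x) = \psi_q'(u)(u')^2 + \psi_q(u)u'' = \psi_q'(u)/x^4 + 2\psi_q(u)/x^3$, and multiplying by $x^3 > 0$ together with the identity $1/x = u+1$ turns the sign of $F''(x)$ into the sign of
\[
(u+1)\,\psi_q'(u) + 2\psi_q(u).
\]
On $(0,\tfrac12]$ one has $u = \frac{1-x}{x} \ge 1$, so I would split this quantity as $\bigl(u\psi_q'(u) + 2\psi_q(u)\bigr) + \psi_q'(u)$: the bracket is positive by Lemma~\ref{lem-1}(b) (since $u\ge 1$) and the remaining term is positive by Lemma~\ref{lem-1}(a). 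Hence $F''(x) > 0$ on $(0,\tfrac12]$, which is the asserted strict log-convexity.

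For part (b) I would put $G(x) = \log\Gamma_q(1-x) - \log\Gamma_q(x)$ and compute $G'(x) = -\psi_q(1-x) - \psi_q(x)$, so that $G''(x) = \psi_q'(1-x) - \psi_q'(x)$. By Lemma~\ref{lem-1}(a) the function $\psi_q'$ is strictly decreasing on $(0,\infty)$ (its derivative $\psi_q''$ is negative), and for $x\in(0,\tfrac12]$ we have $x \le 1-x$, with equality only at $x=\tfrac12$. Therefore $\psi_q'(x) \ge \psi_q'(1-x)$, i.e. $G''(x) \le 0$ on $(0,\tfrac12]$ with strict inequality on $(0,\tfrac12)$; combined with the continuity of $G$ up to the endpoint this yields strict log-concavity of $\frac{\Gamma_q(1-x)}{\Gamma_q(x)}$ on $(0,\tfrac12]$.

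I do not anticipate a real obstacle here. The only points requiring a little care are the algebraic reduction in (a) --- getting $F''$ into a shape to which Lemma~\ref{lem-1}(b) applies, where the harmless surplus term $\psi_q'(u)$ appears precisely because $1/x = u+1$ rather than $u$ --- and, in (b), recording that $G''$ vanishes only at the single point $x=\tfrac12$, which does not interfere with strict concavity on the interval.
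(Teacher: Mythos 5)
Your proof is correct and follows essentially the same route as the paper: in (a) both arguments compute $x^3\bigl(\log\Gamma_q(\tfrac{1-x}{x})\bigr)'' = 2\psi_q(u)+\tfrac{1}{x}\psi_q'(u)$ and conclude via Lemma~\ref{lem-1}(b) together with $\psi_q'>0$ (your splitting off of the surplus term $\psi_q'(u)$ is just the paper's estimate $\tfrac{1}{x}\psi_q'(u)>u\psi_q'(u)$ written additively), and in (b) both reduce to $\psi_q'(1-x)-\psi_q'(x)<0$ from the strict decrease of $\psi_q'$. Your explicit remark that the second derivative in (b) vanishes only at the endpoint $x=\tfrac12$ is a small point of care the paper leaves implicit.
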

\begin{proof}
(a)\ Let $f_1(x) = \log \Gamma_q\Big(\fr{1-x}{x}\Big)$. Then
\[
\begin{split}
f_1'(x) & = -\fr{1}{x^2} \psi_q\big(\fr{1-x}{x} \big) \\
f_1''(x) & = \fr{2}{x^3}\psi_q\big(\fr{1-x}{x} \big) + \fr{1}{x^4}\psi_q'\big(\fr{1-x}{x} \big),
\end{split}
\]
and so,
$x^3 f_1''(x) = 2 \psi_q(y) + \fr{1}{x} \psi_q'(y)$ where $y = \fr{1-x}{x}$. Noting that $y< \fr{1}{x}$ and
that $\fr{1-x}{x} \geq 1$ on $(0,\fr{1}{2}]$, we get with the help of Lemma~\ref{lem-1}(b) that
\[
x^3 f_1''(x) > 2 \psi_q(y) + y \psi_q'(y) > 0.
\]
It follows that $f_1''(x) >0$ and thus $\Gamma_q\Big(\fr{1-x}{x}\Big)$ is strictly log-convex on $(0,\fr{1}{2}]$.

\noindent
(b)\ Let $f_2(x) = \log\fr{\Gamma_q(1-x)}{\Gamma_q(x)}$. Then $f_2''(x) = \psi_q'(1-x)-\psi_q'(x)$.
As $1-x >x$ on $(0,\fr{1}{2})$ and the function $\psi_q(x)$ is strictly decreasing by Lemma~\ref{lem-1}(a), we deduce
that $f_2''(x) <0$. This completes the proof.
\end{proof}
\noindent
For our next result we need the following lemma.
\begin{lemma}\label{f1-f2-lem-1}
There holds
\[
\psi_q\big(\fr{1}{2} \big) < 2 \psi_q(1).
\]
\end{lemma}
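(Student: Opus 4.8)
The plan is to convert the inequality into an elementary one by first putting $\psi_q\big(\fr{1}{2}\big)$ in closed form. Specializing Lemma~\ref{lem-q-psi-sum}(a) to $n=2$ and using $\fr{1-q}{1-q^{1/2}}=1+\sqrt{q}$, one gets
\[
\psi_q\big(\tfrac{1}{2}\big)=\psi_q(1)-2\log(1+\sqrt{q}).
\]
Hence the assertion $\psi_q\big(\fr{1}{2}\big)<2\psi_q(1)$ is equivalent to the single inequality $\psi_q(1)>-2\log(1+\sqrt{q})$, which is a statement purely about $\psi_q(1)$.

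Next I would estimate $\psi_q(1)$ from below. By the left inequality in (\ref{psi-q-1}) (i.e. Lemma~\ref{lem-2} at $x=1$) we have $\psi_q(1)>\fr{q\log q}{1-q}$, so it suffices to prove
\[
\fr{q\log q}{1-q}+2\log(1+\sqrt{q})\geq 0 .
\]
Setting $t=\sqrt{q}\in(0,1)$ and clearing the positive factor $1-t^2$, this is exactly
\[
t^2\log t+(1-t^2)\log(1+t)\geq 0\qquad(0<t<1).
\]
Since the bound $\psi_q(1)>\fr{q\log q}{1-q}$ is strict, establishing this last inequality finishes the proof.

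For the elementary inequality I would use two standard facts. From $G(t,1)<L(t,1)$ in (\ref{means}) one obtains $\sqrt{t}\,\log t>t-1$, hence $t^2\log t>t^{3/2}(t-1)$; substituting this and factoring out $1-t>0$ reduces the claim to $(1+t)\log(1+t)-t^{3/2}>0$. The elementary bound $\log(1+t)>\fr{t}{1+t}$ gives $(1+t)\log(1+t)>t$, and $t>t^{3/2}$ on $(0,1)$, so $(1+t)\log(1+t)>t^{3/2}$, as required.

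I expect the two reduction steps to be routine; the only delicate point is that the lower bound chosen for $\psi_q(1)$ must be sharp enough — the cruder estimate $\psi_q(1)>-1$ from (\ref{psi-q-3}) fails for small $q$ — yet leave behind a tractable elementary inequality. The pair of estimates above, coming from $G<L$ and $\log(1+t)>t/(1+t)$, is calibrated to achieve both.
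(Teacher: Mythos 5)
Your proof is correct and follows the paper's own argument essentially verbatim through the reduction: Lemma~\ref{lem-q-psi-sum}(a) at $n=2$ plus the lower bound $\psi_q(1)>\fr{q\log q}{1-q}$ from Lemma~\ref{lem-2} leave exactly the elementary inequality $t^2\log t+(1-t^2)\log(1+t)\geq 0$ on $(0,1)$. The only divergence is in how that last inequality is dispatched: it is precisely the relation~(\ref{psi-q-2}) with the variable renamed (compare $\fr{q^2\log q}{1-q^2}+\log(1+q)>0$ after clearing $1-q^2$), which the paper simply cites, having already proved it inside Theorem~\ref{thm-psi-q-zero} by substituting $q=\fr{1}{t}$ and analyzing two derivatives of $(t^2-1)\log(t+1)-t^2\log t$; you instead re-derive it from $G(t,1)<L(t,1)$ and $\log(1+t)>\fr{t}{1+t}$, and your chain $t^2\log t+(1-t^2)\log(1+t)>(1-t)\bigl((1+t)\log(1+t)-t^{3/2}\bigr)>0$ checks out. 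So the argument is sound and arguably cleaner at that step, but you could shorten it by just invoking~(\ref{psi-q-2}); your closing observation that the cruder bound $\psi_q(1)>-1$ is not sufficient for small $q$ is accurate and worth keeping in mind.
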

\begin{proof}
By virtue of Lemma~\ref{lem-q-psi-sum} applied to $n=2$, we have
$\psi_q\big(\fr{1}{2}\big) = \psi_q(1)- 2 \log(1+q^{\fr{1}{2}})$ and so our desired inequality means that
$\psi_q(1) > -2 \log(1+q^{\fr{1}{2}})$. Now, by a combination of Lemma~\ref{lem-2} applied to $x=1$ and the relation (\ref{psi-q-2}), we get
$\psi_q(1) > \fr{q\log q}{1-q}-2 \log(1+q^{\fr{1}{2}})$, which completes the proof.
\end{proof}
\begin{theorem}\label{f1-f2-thm-3}
For any $x\in(0,\fr{1}{2}]$, one has
\[
\fr{\Gamma_q(1-x)}{\Gamma_q(x)} \leq \Gamma_q\big(\fr{1-x}{x}\big),
\]
with equality only for $x=\fr{1}{2}$.
\end{theorem}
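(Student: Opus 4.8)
The plan is to reduce the claim to a one‑variable convexity statement. Set
\[
g(x) = \log\Gamma_q\Big(\fr{1-x}{x}\Big) - \log\fr{\Gamma_q(1-x)}{\Gamma_q(x)},
\]
which is well defined and smooth on $(0,\fr12]$ (indeed on a neighborhood of $\fr12$, since $\fr{1-x}{x}$, $1-x$ and $x$ all stay positive there). Because $\Gamma_q(1)=1$, we have $g(\fr12)=\log\Gamma_q(1)-\log 1=0$, so the asserted inequality is exactly the statement that $g(x)\geq 0$ on $(0,\fr12]$, with equality only at $x=\fr12$.

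First I would observe that $g$ is (strictly) convex on $(0,\fr12]$. This is immediate from Theorem~\ref{f1-f2-thm-1}: part~(a) says $\Gamma_q\big(\fr{1-x}{x}\big)$ is strictly log‑convex on $(0,\fr12]$, so the first term of $g$ is strictly convex; part~(b) says $\fr{\Gamma_q(1-x)}{\Gamma_q(x)}$ is strictly log‑concave there, so the second term contributes $-\log\fr{\Gamma_q(1-x)}{\Gamma_q(x)}$, again strictly convex. Hence $g$ is strictly convex on $(0,\fr12]$.

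Next I would evaluate $g'(\fr12)$. Using the derivative formulas already recorded in the proof of Theorem~\ref{f1-f2-thm-1}, namely $\big(\log\Gamma_q(\fr{1-x}{x})\big)' = -\fr{1}{x^2}\psi_q\big(\fr{1-x}{x}\big)$ and $\big(\log\fr{\Gamma_q(1-x)}{\Gamma_q(x)}\big)' = -\psi_q(1-x)-\psi_q(x)$, one gets
\[
g'\Big(\fr12\Big) = -4\psi_q(1) + 2\psi_q\Big(\fr12\Big) = 2\Big(\psi_q\big(\tfrac12\big) - 2\psi_q(1)\Big) < 0,
\]
where the strict inequality is precisely Lemma~\ref{f1-f2-lem-1}.

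Finally, I would conclude from the tangent‑line characterization of convexity: for every $x\in(0,\fr12]$,
\[
g(x) \geq g\Big(\fr12\Big) + g'\Big(\fr12\Big)\Big(x-\fr12\Big) = g'\Big(\fr12\Big)\Big(x-\fr12\Big).
\]
For $x<\fr12$ the right‑hand side is a product of two negative factors, hence strictly positive, so $g(x)>0$; and $g(\fr12)=0$. Exponentiating yields $\Gamma_q\big(\fr{1-x}{x}\big)\geq \fr{\Gamma_q(1-x)}{\Gamma_q(x)}$ with equality only at $x=\fr12$. The only non‑bookkeeping ingredient is the sign of $g'(\fr12)$, i.e. the inequality $\psi_q(\fr12)<2\psi_q(1)$; since that has already been secured in Lemma~\ref{f1-f2-lem-1}, the main obstacle is in effect cleared, and the remaining care is simply to apply the convexity of Theorem~\ref{f1-f2-thm-1} and the tangent‑line bound up to and including the endpoint $x=\fr12$.
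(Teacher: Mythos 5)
Your proof is correct and takes essentially the same route as the paper's: the paper works with the very same auxiliary function, bounds its derivative above by its value at $x=\frac{1}{2}$ (using Theorem~\ref{f1-f2-thm-1}(a) for the first term and the concavity of $\psi_q$ for the second) and then applies Lemma~\ref{f1-f2-lem-1} to conclude the function is decreasing, hence nonnegative on $(0,\frac{1}{2}]$. Your convexity-plus-tangent-line packaging is only a cosmetic variant of that derivative bound, so there is nothing substantively different to flag.
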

\begin{proof}
Let
\[
f(x)= \log \Gamma_q\big( \fr{1-x}{x} \big) +\log \Gamma_q(x) - \log \Gamma_q(1-x).
\]
As $f\big(\fr{1}{2}\big) = 0$, it will be enough to prove that $f(x)$ is decreasing on $(0,\fr{1}{2}]$.
Note first that from Theorem~\ref{f1-f2-thm-1}(a) we have that
$\Big(\log \Gamma_q\big( \fr{1-x}{x} \big)\Big)' $ is increasing on $(0,\fr{1}{2}]$, which implies that
\[
\Big(\log \Gamma_q\big( \fr{1-x}{x} \big)\Big)' = -\fr{1}{x^2}\psi_q\big(\fr{1-x}{x}\big) \leq -4 \psi_q(1).
\]
Moreover, since the function $\psi_q(x)$ is concave by Lemma~\ref{lem-1}(a), we have
\[
\psi_q(x) + \psi_q(1-x) \leq 2 \psi_q\big( \fr{x+(1-x)}{2} \big) = 2 \psi_q\big( \fr{1}{2}\big).
\]
Now as
\[
f'(x) = \Big(\log \Gamma_q\big( \fr{1-x}{x} \big)\Big)' + \psi_q(x) + \psi_q(1-x),
\]
we deduce from Lemma~\ref{f1-f2-lem-1} and the above facts that
\[
f'(x) \leq -4 \psi_q(1) + 2 \psi_q\big( \fr{1}{2} \big) = 2 \Big( \psi_q\big(\fr{1}{2}\big) - 2\psi_q(1)  \Big)
\leq 0.
\]
showing that $f(x)$ is decreasing. This completes the proof.
\end{proof}
\noindent
We close this section with an inequality of Ky Fan type for the $q$-gamma function.
For Ky Fan inequalities related to the classical gamma function the reader is referred to
Neuman~and~S\'{a}ndor~\cite{Neuman-Sandor}.
\begin{theorem}\label{KF-thm}
For a positive integer $k$ and $i=1,2,\ldots,k$, let $x_i\in (0,\fr{1}{2}]$ and let $x_i' = 1-x_i$. Let
$A_k$ denote the arithmetic mean of $x_i$ and let $A_k'$ denote the arithmetic mean of $x_i'$. Then
\[
\begin{split}
(a) \quad & \Gamma_q\big(\fr{A_k'}{A_k} \big) \leq \Big( \prod_{i=1}^k \Gamma_q\big(\fr{x_i}{x_i'} \big) \Big)^{\fr{1}{k}} \\
(b) \quad & \fr{\Gamma_q(A_k')}{\Gamma_q(A_k)} \geq \Big( \prod_{i=1}^k \fr{\Gamma_q(x_i')}{\Gamma_q(x_i)} \Big)^{\fr{1}{k}}.
\end{split}
\]
\end{theorem}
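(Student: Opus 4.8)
The plan is to obtain both inequalities as direct applications of Jensen's inequality to the convexity results of Theorem~\ref{f1-f2-thm-1}, the key point being that reflecting and averaging commute: since $x_i' = 1-x_i$, one has
\[
A_k' = \fr{1}{k}\sum_{i=1}^k x_i' = \fr{1}{k}\sum_{i=1}^k (1-x_i) = 1 - A_k .
\]
Moreover, since each $x_i$ lies in $(0,\fr{1}{2}]$, the mean $A_k$ is a convex combination of points of $(0,\fr{1}{2}]$ and hence again lies in $(0,\fr{1}{2}]$; so every argument that will be fed to $\Gamma_q$ below stays inside the interval on which the relevant one‑sided convexity holds, and in particular $\Gamma_q$ is evaluated only at positive arguments.

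For part (a), I would set $\phi(x) = \log\Gamma_q\big(\fr{1-x}{x}\big)$, which is strictly convex on $(0,\fr{1}{2}]$ by Theorem~\ref{f1-f2-thm-1}(a). Applying Jensen's inequality with the equal weights $a_1=\dots=a_k=1$ to the points $x_1,\dots,x_k$ gives $\phi(A_k)\le \fr{1}{k}\sum_{i=1}^k\phi(x_i)$; rewriting $\fr{1-A_k}{A_k}=\fr{A_k'}{A_k}$ and $\fr{1-x_i}{x_i}=\fr{x_i'}{x_i}$ and then exponentiating yields
\[
\Gamma_q\Big(\fr{A_k'}{A_k}\Big)\le\Big(\prod_{i=1}^k\Gamma_q\Big(\fr{x_i'}{x_i}\Big)\Big)^{\fr{1}{k}},
\]
which is assertion (a). For part (b) the argument is the mirror image: I would set $\chi(x)=\log\fr{\Gamma_q(1-x)}{\Gamma_q(x)}$, which is strictly \emph{concave} on $(0,\fr{1}{2}]$ by Theorem~\ref{f1-f2-thm-1}(b), so Jensen's inequality is reversed, $\chi(A_k)\ge\fr{1}{k}\sum_{i=1}^k\chi(x_i)$; using $1-A_k=A_k'$ and exponentiating gives
\[
\fr{\Gamma_q(A_k')}{\Gamma_q(A_k)}\ge\Big(\prod_{i=1}^k\fr{\Gamma_q(x_i')}{\Gamma_q(x_i)}\Big)^{\fr{1}{k}},
\]
which is (b).

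There is no genuine obstacle here: all of the analytic content has already been packaged into Theorem~\ref{f1-f2-thm-1}, and the present statement merely organizes it through Jensen's inequality. The two points that do require attention are (i) verifying that $A_k\in(0,\fr{1}{2}]$, so that convexity and concavity may legitimately be invoked at the mean — this is precisely where the hypothesis $x_i\in(0,\fr{1}{2}]$ is used — and (ii) tracking the direction of Jensen's inequality, which flips between (a) and (b) because the relevant function is convex in the first case and concave in the second. One could just as well replace the arithmetic means by weighted means $\big(\sum a_i x_i\big)/\big(\sum a_i\big)$ with arbitrary positive weights $a_i$, since the identity $\big(\sum a_i x_i'\big)/\big(\sum a_i\big) = 1 - \big(\sum a_i x_i\big)/\big(\sum a_i\big)$ still does the required bookkeeping; and following the equality condition for Jensen's inequality with a strictly convex (resp.\ concave) function shows that equality in (a) and (b) occurs exactly when $x_1=\dots=x_k$.
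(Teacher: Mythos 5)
Your proof is correct and is essentially the paper's own argument: both parts follow from Jensen's inequality applied to the log-convexity of $\Gamma_q\big(\frac{1-x}{x}\big)$ and the log-concavity of $\frac{\Gamma_q(1-x)}{\Gamma_q(x)}$ established in Theorem~\ref{f1-f2-thm-1}, together with the observation $A_k'=1-A_k$. One remark: what Jensen's inequality actually produces in part (a) is the product of $\Gamma_q\big(\frac{x_i'}{x_i}\big)$, exactly as you wrote, so the $\frac{x_i}{x_i'}$ appearing in the theorem's displayed statement is evidently a typo rather than a gap in your argument.
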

\begin{proof}
 As the function $\Gamma_q\Big(\fr{1-x}{x}\Big)$ is strictly log-convex on $(0,\fr{1}{2}]$ by Theorem~\ref{f1-f2-thm-1}(a), an
 application of Jensen's inequality to this function yields part (a). Moreover, an application of Jensen's inequality to
 $\fr{\Gamma_q(1-x)}{\Gamma_q(x)}$, which is strictly log-concave by Theorem~\ref{f1-f2-thm-1}(b), gives to part (b).
\end{proof}
\section{Inequalities related to $q$-series}
\begin{theorem}\label{thm-2-1}
For any positive integer $n$, any $x>0$, and any $a\in (0,1)$, we have
\[
\begin{split}
(a)\quad & (1-q^x)^{1-a} < \fr{(q^{x+a};q)_n}{(q^{x+1};q)_n} < (1-q^{x+a})^{1-a} \\
(b)\quad & (1-q^x)^{1-a} \leq {}_1\phi_0 \big(q^{a-1},-;q, q^{x+1}\big) \leq (1-q^{x+a})^{1-a}.
\end{split}
\]
\end{theorem}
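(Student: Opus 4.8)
The plan is to deduce both parts from the $q$-analogue of Gautschi's inequality, $\big([y]_q\big)^{1-a}<\Gamma_q(y+1)/\Gamma_q(y+a)<\big([y+a]_q\big)^{1-a}$ for $y>0$ and $a\in(0,1)$ (the Corollary following Lemma~\ref{lem-3-1}), together with the elementary identity $(1-q)^{1-a}\big([y]_q\big)^{1-a}=(1-q^y)^{1-a}$.

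I would treat part (b) first. Applying the $q$-binomial theorem~(\ref{q-binomial}) with parameters $q^{a-1}$ and $q^{x+1}$ --- legitimate since $|q^{x+1}|<1$, and $q^{a-1}\cdot q^{x+1}=q^{x+a}$ --- gives
\[
{}_1\phi_0\big(q^{a-1},-;q,q^{x+1}\big)=\frac{(q^{x+a};q)_\infty}{(q^{x+1};q)_\infty}.
\]
From $\Gamma_q(y)=\frac{(q;q)_\infty}{(q^y;q)_\infty}(1-q)^{1-y}$ one checks that this quotient equals $(1-q)^{1-a}\,\Gamma_q(x+1)/\Gamma_q(x+a)$, so inserting the $q$-Gautschi bounds and using the identity above yields $(1-q^x)^{1-a}<{}_1\phi_0\big(q^{a-1},-;q,q^{x+1}\big)<(1-q^{x+a})^{1-a}$, which is part (b).

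For part (a) I would use the companion identity $(q^y;q)_n=(1-q)^n\,\Gamma_q(y+n)/\Gamma_q(y)$, obtained by iterating~(\ref{basic-fact}), to rewrite
\[
\frac{(q^{x+a};q)_n}{(q^{x+1};q)_n}=\frac{\Gamma_q(x+1)/\Gamma_q(x+a)}{\Gamma_q(x+n+1)/\Gamma_q(x+n+a)},
\]
and then bound the numerator and denominator by the $q$-Gautschi inequality applied at $x$ and at $x+n$ respectively. The lower bound can also be read off directly: $\frac{(q^{x+a};q)_n}{(q^{x+1};q)_n}=\prod_{i=0}^{n-1}\frac{1-q^{x+a+i}}{1-q^{x+1+i}}$ is decreasing in $n$, since $a<1$ forces each successive factor into $(0,1)$, so the finite product exceeds its limit, which is precisely the quantity bounded below in part (b).

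The delicate point will be the upper bound in part (a) for finite $n$: bounding the numerator and denominator of the last display separately only produces $\big((1-q^{x+a})/(1-q^{x+n})\big)^{1-a}$, and since $1-q^{x+n}<1$ this is larger than the stated bound $(1-q^{x+a})^{1-a}$. Pinning down the sharp constant therefore calls for a closer analysis --- either of the error terms in the $q$-Gautschi estimate, or of the monotonicity of the factors $\frac{1-q^{x+a+i}}{1-q^{x+1+i}}$ in $i$ and $n$ --- and this is the step I expect to require the most care.
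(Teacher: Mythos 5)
Your treatment of part (b) and of the lower bound in part (a) is correct, and after unwinding it amounts to essentially the same mechanism as the paper's: the $q$-Gautschi corollary you invoke is Lemma~\ref{lem-3-1} applied to $\Gamma_q$, while the paper applies Lemma~\ref{lem-3-1} directly to the strictly log-convex function $1/(q^x;q)_n$ and then lets $n\to\infty$ to obtain (b). Your identities ${}_1\phi_0(q^{a-1},-;q,q^{x+1})=(1-q)^{1-a}\,\Gamma_q(x+1)/\Gamma_q(x+a)$ and $(q^y;q)_n=(1-q)^n\,\Gamma_q(y+n)/\Gamma_q(y)$ are both correct.

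The ``delicate point'' you flag is not a defect of your method: the upper bound in part (a) is false as stated. Take $n=1$, $q=\tfrac12$, $x=\tfrac1{10}$, $a=\tfrac12$. Then
\[
\frac{(q^{x+a};q)_1}{(q^{x+1};q)_1}=\frac{1-2^{-0.6}}{1-2^{-1.1}}\approx 0.638,
\qquad
(1-q^{x+a})^{1-a}=\big(1-2^{-0.6}\big)^{1/2}\approx 0.583,
\]
so the claimed strict inequality fails. The source of the error is visible in the paper's own proof: Lemma~\ref{lem-3-1} applied to $1/(q^x;q)_n$ yields the bounds $\big(g(x)\big)^{1-a}$ and $\big(g(x+a)\big)^{1-a}$ with $g(y)=\frac{(q^{y};q)_n}{(q^{y+1};q)_n}=\frac{1-q^{y}}{1-q^{y+n}}$, but the paper evaluates this telescoping ratio as $1-q^{y}$, silently discarding the factor $1-q^{y+n}$ in the denominator; that identification is valid only in the limit $n\to\infty$, which is why part (b) survives. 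The correct finite-$n$ conclusion of the paper's method is
\[
\Big(\frac{1-q^{x}}{1-q^{x+n}}\Big)^{1-a}<\frac{(q^{x+a};q)_n}{(q^{x+1};q)_n}<\Big(\frac{1-q^{x+a}}{1-q^{x+a+n}}\Big)^{1-a},
\]
whose left member dominates $(1-q^x)^{1-a}$ (so the stated lower bound is true, as you showed by two different routes) but whose right member strictly exceeds $(1-q^{x+a})^{1-a}$. Your monotonicity-in-$n$ observation shows the stated upper bound does hold once $n$ is large enough, since the product decreases to the quantity bounded in (b), but no sharpening of the error terms will rescue it for all $n$; the theorem's part (a) should be corrected to the display above.
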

\begin{proof}
Let $f(x)= (q^x;q)_n$. Then from $f(x) = \sum_{i=0}^{n-1} \log (1-q^{x+i})$, we get
\[
\Big(\log f(x)\Big)'' = -(\log q)^2 \sum_{i=0}^{n-1} \fr{q^{x+i}}{(1-q^{x+i})^2} < 0
\]
which means that the function $f(x)$ is strictly log-concave and so, $\fr{1}{f(x)}$ is strictly log-convex by Lemma~\ref{lem-3-2}(b). Then by Lemma~\ref{lem-3-1} applied to $\fr{1}{f(x)}$,
\[
(1-q^x)^{1-a}=\left(\fr{(q^x;q)_n}{(q^{x+1};q)_n}\right)^{1-a}  < \fr{(q^{x+a};q)_n}{(q^{x+1};q)_n} < \left(\fr{(q^{x+a};q)_n}{(q^{x+a+1};q)_n} \right)^{1-a} = (1-q^{x+a})^{1-a},
\]
which proves part (a). As to part (b), take limits as $n\to \infty$ in the previous inequalities and use the $q$-binomial theorem~\ref{q-binomial} to obtain
\[
(1-q^x)^{1-a} \leq \fr{(q^{x+a};q)_{\infty}}{(q^{x+1};q)_{\infty}} = {}_1\phi_0 \big(q^{a-1},-;q, q^{x+1}\big) \leq (1-q^{x+a})^{1-a},
\]
which is the desired double inequality.
\end{proof}
\begin{theorem}\label{thm-2-2}
For any positive integer $n$ we have
\[
\fr{1}{(q;q)_{\infty}} \leq \inf \left\{\fr{(q^{\fr{1}{2}};q)_{\infty}^{n-1}}
{(q;q^{\fr{1}{n}})_{\infty} (1-q^{\fr{1}{2}})^{n-1}},\
\fr{(q^{\fr{1}{2}};q)_{\infty}^{n-1}}
{(q^{\fr{1}{n}};q^{\fr{1}{n}})_{\infty}},\
\fr{(q^{\fr{1}{2}};q)_{\infty}^{n-1} (-q^{\fr{1}{n}}; q^{\fr{1}{n}})_{n-1}}
{(q^{\fr{1}{n}};q^{\fr{1}{n}})_{\infty} (1+q^{\fr{1}{2}})^{n-1}} \right\}
\]
\end{theorem}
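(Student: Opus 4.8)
The right-hand side being an infimum of three expressions, the statement amounts to three separate upper bounds for $\frac{1}{(q;q)_\infty}$, and my plan is to obtain all three from a single log-concavity-plus-Jensen argument. The point is that, after clearing denominators, each of the three bounds is equivalent to an inequality of the form $\prod_{k=1}^{n-1}\phi\!\left(\frac{k}{n}\right)\le \phi\!\left(\frac12\right)^{n-1}$ for a suitable positive function $\phi$ on $(0,\infty)$; since $\frac{1}{n-1}\sum_{k=1}^{n-1}\frac{k}{n}=\frac12$, such an inequality is precisely Jensen's inequality applied to the convex function $-\log\phi$, valid as soon as $\phi$ is log-concave.

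For the three cases I would take $\phi_1(x)=(q^x;q)_\infty$, $\phi_2(x)=\frac{(q^x;q)_\infty}{1-q^x}=(q^{x+1};q)_\infty$, and $\phi_3(x)=\frac{(q^x;q)_\infty}{1+q^x}$. Log-concavity of $\phi_1$ and $\phi_2$ is immediate from Lemma~\ref{lem-1}(a): since $\Gamma_q(x)=\frac{(q;q)_\infty(1-q)^{1-x}}{(q^x;q)_\infty}$ is log-convex, its reciprocal — hence $(q^x;q)_\infty$, and likewise $(q^{x+1};q)_\infty$ — is log-concave; and $\phi_3$ is log-concave as the product of $\phi_1$ and $\frac{1}{1+q^x}$, the latter being log-concave because $\frac{d^2}{dx^2}\log(1+q^x)=\frac{q^x(\log q)^2}{(1+q^x)^2}>0$. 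To pass between the products $\prod_{k=1}^{n-1}\phi_i\!\left(\frac{k}{n}\right)$ and the displayed $q$-shifted factorials I would use the residue-class factorizations obtained by sorting the factors of an infinite product by the exponent modulo $n$, namely $(q^{1/n};q^{1/n})_\infty=(q;q)_\infty\prod_{k=1}^{n-1}(q^{k/n};q)_\infty$, then $(q;q^{1/n})_\infty=(q;q)_\infty\prod_{k=1}^{n-1}\frac{(q^{k/n};q)_\infty}{1-q^{k/n}}$, and, since $(-q^{1/n};q^{1/n})_{n-1}=\prod_{k=1}^{n-1}(1+q^{k/n})$, also $\frac{(q^{1/n};q^{1/n})_\infty}{(-q^{1/n};q^{1/n})_{n-1}}=(q;q)_\infty\prod_{k=1}^{n-1}\frac{(q^{k/n};q)_\infty}{1+q^{k/n}}$. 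Applying the Jensen bound to $\phi_1$ and multiplying through by $(q;q)_\infty$ gives the middle bound; applying it to $\phi_2$ and multiplying by $(q;q)_\infty(1-q^{1/2})^{n-1}$ gives the first bound; applying it to $\phi_3$ and multiplying by $(q;q)_\infty(1+q^{1/2})^{n-1}$ gives the third. Equivalently, one may phrase the middle bound as $\prod_{k=1}^{n-1}\Gamma_q\!\left(\frac{k}{n}\right)\ge\Gamma_q\!\left(\frac12\right)^{n-1}$, which is exactly what the $q$-Gauss multiplication formula~(\ref{q-Gauss-prod-1}) yields, and which is again just log-convexity of $\Gamma_q$ plus Jensen.

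All the actual computation here — the two second-derivative signs and the three residue-class factorizations — is short, and the Jensen step is a one-liner, so the only genuine work is recognizing, for each of the three right-hand sides, which log-concave generator $\phi_i$ and which factorization identity feed it; that matching is the step I expect to require the most care. As an aside, $x\mapsto\log(1-q^x)$ is concave and $x\mapsto\log(1+q^x)$ is convex, so the same Jensen argument shows the three bounds are ordered with the first the smallest; hence the infimum is in fact the first bound, though there is no harm in recording all three since they are produced uniformly. Finally, the cases $n=1,2$ are degenerate — the relevant products are empty or reduce to the single term $q^{1/2}$ — and there all three bounds equal $\frac{1}{(q;q)_\infty}$, so one gets equality rather than strict inequality.
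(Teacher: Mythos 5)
Your proof is correct and is essentially the paper's argument: both apply Jensen's inequality at the points $k/n$ (whose mean over $k=1,\dots,n-1$ is $\frac12$) to the same three convexity facts, since your log-concave $\phi_1,\phi_2,\phi_3$ are exactly the log-affine factor $(q;q)_{\infty}(1-q)^{1-x}$ divided by the paper's log-convex functions $\Gamma_q(x)$, $(1-q^x)\Gamma_q(x)$, and $(1+q^x)\Gamma_q(x)$. The only real difference is bookkeeping — you obtain the closed forms by direct residue-class factorization of the infinite products where the paper routes the computation through its $q$-Gauss multiplication formula~(\ref{q-Gauss-prod-1}) — and your observations that $(1+q^x)$ has convex logarithm (which the paper asserts without proof via log-convexity of $(1+q^x)\Gamma_q(x)$) and that the first bound is always the smallest are welcome extras.
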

\begin{proof}
The function $(1-q^x)\Gamma_q(x)$ is strictly log-convex by Askey~\cite{Askey}.
Then by Jensen's inequality
\[
\log\Big((1-q^{(x_1+\ldots+ x_k)/k})\Gamma_q(\frac{x_1+\ldots+ x_k}{k}) \Big)
\leq
\fr{1}{k}\Big(\log\big((1-q^{x_1})\Gamma_q(x_1)\big)+\ldots+\log\big((1-q^{x_k})\Gamma_q(x_k)\big)\Big),
\]
which by taking $k=n-1$ and $x_i = \frac{i}{n-1}$ and simplifying yield
\[
\Big((1-q^{1/2})\Gamma_q(1/2) \Big)^{n-1} \leq (q^{1/n};q^{1/n})_{n-1}
\prod_{i=1}^{n-1}\Gamma_q(i/n),
\]
or, by  (\ref{q-Gauss-prod-1}),
\[
\Big((1-q^{1/2})\Gamma_q(1/2) \Big)^{n-1} \leq
(q^{1/n};q^{1/n})_{n-1} \big(\Gamma_q(1/2)\big)^{n-1}
\frac{(q^{1/2};q^{1/2})_{\infty}^{n-1}}{(q;q)_{\infty}^{n-2} (q^{1/n};q^{1/n})_{\infty}}.
\]
Simplifying gives
\begin{equation}\label{help-1-thm-2-2}
\frac{(q^{1/2};q)_{\infty}^{n-1}}{(q;q^{1/n})_{\infty} (1-q^{1/2})^{n-1}} \geq \frac{1}{(q;q)_{\infty}}.
\end{equation}
Now apply Jensen's inequality to the strictly log-convex function $\Gamma_q(x)$
and proceed as before to obtain
\begin{equation}\label{help-2-thm-2-2}
\frac{(q^{1/2};q)_{\infty}^{n-1}}{(q^{1/n};q^{1/n})_{\infty}} \geq \frac{1}{(q;q)_{\infty}}.
\end{equation}
Finally, note that  $(1+q^x)\Gamma_q(x)$ is strictly log-convex and use the same sort of argument
as before to deduce that
\begin{equation}\label{help-3-thm-2-2}
\frac{(q^{1/2};q)_{\infty}^{n-1} (-q^{1/n};q^{1/n})_{n-1}}{(q^{1/n};q^{1/n})_{\infty} (1+q^{1/2})^{n-1}} \geq
\frac{1}{(q;q)_{\infty}}.
\end{equation}
Combining (\ref{help-1-thm-2-2}), (\ref{help-2-thm-2-2}), and (\ref{help-3-thm-2-2}) yields
the desired result.
\end{proof}
\begin{theorem}\label{thm-2-3}
For any integer $n>1$, we have
\[
 (q^{\fr{1}{2}};q)_{\infty}^{\varphi(n)} \geq
 \sup \left\{
 \prod_{d\mid n} (q^{\fr{1}{d}};q^{\fr{1}{d}})_{\infty}^{\mu(\fr{n}{d})},
(1-q^{\fr{1}{2}})^{\varphi(n)} \prod_{d\mid n} (q;q^{\fr{1}{d}})_{\infty}^{\mu(\fr{n}{d})},
(1+q^{\fr{1}{2}})^{\varphi(n)}
\fr{\prod_{d\mid n} (q^{\fr{1}{d}};q^{\fr{1}{d}})_{\infty}^{\mu(\fr{n}{d})}}
{\prod_{d\mid n} (-q^{\fr{1}{d}};q^{\fr{1}{d}})_{d-1}^{\mu(\fr{n}{d})}} \right\}.
\]
\end{theorem}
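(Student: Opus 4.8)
The plan is to run the argument of Theorem~\ref{thm-2-2} with the points $x_k=\fr{k}{n}$, $1\le k\le n$, $(k,n)=1$, so that the closed form (\ref{q-short-gam-prod}) for $P_q(n)$ takes the place of (\ref{q-Gauss-prod-1}). The arithmetic input is that for $n>1$ the residues coprime to $n$ in $\{1,\dots,n\}$ are symmetric under $k\mapsto n-k$, hence $\sum_{(k,n)=1}k=\fr{n\varphi(n)}{2}$ and the arithmetic mean of the $\varphi(n)$ numbers $x_k$ is exactly $\fr12$.

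First, recall from Askey~\cite{Askey} that the three functions $\Gamma_q(x)$, $(1-q^x)\Gamma_q(x)$, and $(1+q^x)\Gamma_q(x)$ are strictly log-convex on $(0,\infty)$. Applying Jensen's inequality to $\log\Gamma_q$ at the points $x_k$ gives $\Gamma_q(\fr12)^{\varphi(n)}\le P_q(n)$; substituting (\ref{q-short-gam-prod}) and cancelling the common factor $\Gamma_q(\fr12)^{\varphi(n)}$ yields the first member of the supremum. Applying Jensen in the same way to $(1-q^x)\Gamma_q(x)$ and to $(1+q^x)\Gamma_q(x)$ produces, besides $P_q(n)$, the extra factors $\prod_{(k,n)=1}(1-q^{k/n})$ and $\prod_{(k,n)=1}(1+q^{k/n})$, and the problem comes down to putting these two products in closed form.

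For that I would use M\"obius inversion over the divisors of $n$. Grouping $\{1,\dots,n\}$ by $\gcd$ with $n$ gives $\prod_{j=1}^{n}(1\mp q^{j/n})=\prod_{m\mid n}\prod_{(j,m)=1}(1\mp q^{j/m})$, so with $F(m):=\prod_{j=1}^{m}(1\mp q^{j/m})$ one has $\prod_{(k,n)=1}(1\mp q^{k/n})=\prod_{d\mid n}F(d)^{\mu(n/d)}$. Here $F(m)$ equals $(q^{1/m};q^{1/m})_m=(1-q)(q^{1/m};q^{1/m})_{m-1}$ in the minus case and $(-q^{1/m};q^{1/m})_m=(1+q)(-q^{1/m};q^{1/m})_{m-1}$ in the plus case, and since $\sum_{d\mid n}\mu(n/d)=0$ for $n>1$ the constant factors $1\mp q$ disappear, leaving
\[
\prod_{\substack{1\le k\le n\\(k,n)=1}}(1-q^{k/n})=\prod_{d\mid n}(q^{1/d};q^{1/d})_{d-1}^{\mu(n/d)},\qquad
\prod_{\substack{1\le k\le n\\(k,n)=1}}(1+q^{k/n})=\prod_{d\mid n}(-q^{1/d};q^{1/d})_{d-1}^{\mu(n/d)}.
\]
Feeding these, together with the elementary identity $(q^{1/d};q^{1/d})_\infty=(q^{1/d};q^{1/d})_{d-1}(q;q^{1/d})_\infty$, into the two Jensen inequalities and simplifying delivers the second and third members of the supremum. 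Since each of the three quantities is thereby shown to be at most $(q^{1/2};q)_\infty^{\varphi(n)}$, so is their supremum, which completes the proof.

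The step I expect to be the main obstacle is the last one: the M\"obius bookkeeping, namely keeping track of which $q$-shifted factorial (base $q^{1/d}$ versus $q$, length $\infty$ versus $d-1$) shows up where and verifying that the $(1\mp q)^{\sum_{d\mid n}\mu(n/d)}$ factors really cancel. The rest is a faithful transcription of the argument already used for Theorem~\ref{thm-2-2}.
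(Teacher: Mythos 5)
Your proposal is correct and follows essentially the same route as the paper: the same three Jensen applications (to $\Gamma_q(x)$, $(1-q^x)\Gamma_q(x)$, and $(1+q^x)\Gamma_q(x)$ at the points $\fr{k}{n}$ with $(k,n)=1$, whose mean is $\fr12$), combined with the closed form (\ref{q-short-gam-prod}) and the M\"obius-inversion identity $\prod_{(k,n)=1}(1\mp q^{k/n})=\prod_{d\mid n}(\mp q^{\fr{1}{d}};q^{\fr{1}{d}})_{d-1}^{\mu(n/d)}$. Your write-up is in fact slightly more explicit than the paper's on the M\"obius bookkeeping (the grouping by $\gcd$, the cancellation of the $(1\mp q)$ factors via $\sum_{d\mid n}\mu(n/d)=0$, and the splitting $(q^{\fr{1}{d}};q^{\fr{1}{d}})_\infty=(q^{\fr{1}{d}};q^{\fr{1}{d}})_{d-1}(q;q^{\fr{1}{d}})_\infty$), and that bookkeeping does check out.
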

\begin{proof}
Note first the following well-known facts on the Euler totient function $\varphi(n)$:
\[
\sum_{\substack{i=1\\ (i,n)=1}}^n 1 = \varphi(n)\quad \text{and\quad}
\sum_{\substack{i=1\\ (i,n)=1}}^n i = \sum_{\substack{i=1\\ (i,n)=1}}^n (n-i)= \fr{n\varphi(n)}{2},
\]
from which it follows that
\[
\sum_{\substack{i=1\\ (i,n)=1}}^n \fr{i}{n} = \fr{\varphi(n)}{2} \quad\text{and\quad}
\fr{\displaystyle\sum_{\substack{i=1\\ (i,n)=1}}^n \fr{i}{n}}
{\displaystyle\sum_{\substack{i=1\\ (i,n)=1}}^n 1} = \fr{1}{2}.
\]
Apply Jensen's inequality to the  function $\Gamma_q(x)$ with $k=\varphi(n)$ and
$x_i= \fr{i}{n}$ for $i=1,\ldots,\varphi(n)$ and use the above to get
\[
\log \Gamma_q\Big( \fr{1}{2} \Big) \leq
\fr{1}{\varphi(n)} \log\prod_{\substack{i=1\\ (i,n)=1}}^n \Gamma_q\big(\fr{i}{n}\big),
\]
which by virtue of (\ref{q-short-gam-prod}) means
\[
\Big( \Gamma_q\Big(\fr{1}{2}\Big) \Big)^{\varphi(n)}
\leq P_q(n) = \fr{ \left(\Gamma_q\Big( \fr{1}{2} \Big)\right)^{\varphi(n)} (q^{\fr{1}{2}};q)^{\varphi(n)}}
{ \prod_{d\mid n}\big( \big(q^{\fr{1}{d}};q^{\fr{1}{d}}\big)_{\infty} \big)^{\mu\left(\fr{n}{d}\right)}}.
\]
It follows that
\begin{equation}\label{help-1-thm-2-3}
(q^{\fr{1}{2}};q)^{\varphi(n)} \geq  \prod_{d\mid n}
\big(q^{\fr{1}{d}};q^{\fr{1}{d}}\big)_{\infty}^{\mu\left(\fr{n}{d}\right)}.
\end{equation}
In the remaining part of the proof we shall need
\begin{equation}\label{help-2-thm-2-3}
\prod_{\substack{i=1\\ (i,n)=1}}^n (1-q^{\fr{i}{n}}) =
\prod_{d\mid n}(q^{\fr{1}{d}};q^{\fr{1}{d}})_{d-1}^{\mu\left(\fr{n}{d}\right)}
\end{equation}
which follows by the M\"{o}bius inversion formula applied to
$\prod_{i=1}^{n-1} (1-q^{\fr{i}{n}}) = (q^{\fr{1}{n}};q^{\fr{1}{n}})_{n-1}$.
Now apply Jensen's inequality to the  function $(1-q^x)\Gamma_q(x)$ with $k=\varphi(n)$ and
$x_i= \fr{i}{n}$ for $i=1,\ldots,\varphi(n)$  to deduce
\[
\log \Big((1-q^{\fr{1}{2}}) \Gamma_q\Big( \fr{1}{2}\Big) \Big) \leq
\fr{1}{\varphi(n)} \log\prod_{\substack{i=1\\ (i,n)=1}}^n (1-q^{\fr{i}{n}}) \Gamma_q\big(\fr{i}{n}\big),
\]
which by virtue of (\ref{q-short-gam-prod}) and (\ref{help-2-thm-2-3}) means
\[
\Big( (1-q^{\fr{1}{2}}) \Gamma_q\Big(\fr{1}{2}\Big) \Big)^{\varphi(n)}
\leq
\prod_{d\mid n}(q^{\fr{1}{d}};q^{\fr{1}{d}})_{d-1}^{\mu\left(\fr{n}{d}\right)}
 \fr{ \left(\Gamma_q\Big( \fr{1}{2} \Big)\right)^{\varphi(n)} (q^{\fr{1}{2}};q)^{\varphi(n)}}
{ \prod_{d\mid n}\big( \big(q^{\fr{1}{d}};q^{\fr{1}{d}}\big)_{\infty} \big)^{\mu\left(\fr{n}{d}\right)}}.
\]
Simplifying the foregoing inequality yields
\begin{equation}\label{help-3-thm-2-3}
(q^{\fr{1}{2}};q)^{\varphi(n)} \geq  (1-q^{\fr{1}{2}})^{\varphi(n)} \prod_{d\mid n}
\big(q;q^{\fr{1}{d}}\big)_{\infty}^{\mu\left(\fr{n}{d}\right)}.
\end{equation}
Furthermore, apply Jensen's inequality to the  function $(1+q^x)\Gamma_q(x)$ with $k=\varphi(n)$ and
$x_i= \fr{i}{n}$ for $i=1,\ldots,\varphi(n)$ and use the same argument as above to obtain
\begin{equation}\label{help-4-thm-2-3}
(q^{\fr{1}{2}};q)^{\varphi(n)} \geq
(1+q^{\fr{1}{2}})^{\varphi(n)}
\fr{\prod_{d\mid n} (q^{\fr{1}{d}};q^{\fr{1}{d}})_{\infty}^{\mu(\fr{n}{d})}}
{\prod_{d\mid n} (-q^{\fr{1}{d}};q^{\fr{1}{d}})_{d-1}^{\mu(\fr{n}{d})}}.
\end{equation}
Finally combine (\ref{help-1-thm-2-3}), (\ref{help-3-thm-2-3}), and (\ref{help-4-thm-2-3}) to
complete the proof.
\end{proof}
For our next result we need the following lemma of Askey~\cite{Askey} which deals with the behaviour of
$\Gamma_q$ as a function of $q$.
\begin{lemma}\label{lem-2-1}
Let $0<p<q<1$. Then
\begin{align*}
(a)\quad & \Gamma_p(x) \leq \Gamma_q(x) \leq \Gamma (x),& 0<x\leq 1\quad\text{or\quad} x\geq 2 \\
(b)\quad & \Gamma_p (x) \geq \Gamma_q(x) \geq \Gamma (x),& 1\leq x \leq 2.
\end{align*}
\end{lemma}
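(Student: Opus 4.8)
The plan is to reduce the whole statement to one monotonicity fact about $\psi_q'$ and then integrate. Adopt the limiting convention $\Gamma_1:=\Gamma$, $\psi_1:=\psi$ (legitimate since $\Gamma_q\to\Gamma$, hence $\psi_q\to\psi$, as $q\to1^-$). Because $\Gamma_r(1)=\Gamma_r(2)=1$ for all $r\in(0,1]$ and $\log\Gamma_r(x)=\int_1^x\psi_r(t)\,dt$, it suffices to prove the

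\emph{Claim. For $0<a<b\le1$ the function $\psi_b(t)-\psi_a(t)$ is strictly increasing on $(0,\infty)$.}

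Granting the Claim, $\int_1^2\bigl(\psi_b(t)-\psi_a(t)\bigr)\,dt=\log\Gamma_b(2)-\log\Gamma_a(2)=0$; a strictly increasing continuous function with zero mean on $[1,2]$ must change sign there, so $\psi_b-\psi_a$ has a unique zero $c$, necessarily with $c\in(1,2)$, and is negative on $(0,c)$, positive on $(c,\infty)$. Hence, for $0<x\le1$, $\log\frac{\Gamma_b(x)}{\Gamma_a(x)}=-\int_x^1(\psi_b-\psi_a)\,dt\ge0$; for $x\ge2$, $\log\frac{\Gamma_b(x)}{\Gamma_a(x)}=\int_1^2(\psi_b-\psi_a)+\int_2^x(\psi_b-\psi_a)=\int_2^x(\psi_b-\psi_a)\,dt\ge0$; and for $1\le x\le2$ the map $x\mapsto\int_1^x(\psi_b-\psi_a)\,dt$ vanishes at $x=1$ and $x=2$ and, having derivative $\psi_b-\psi_a$, decreases then increases, so it is $\le0$. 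All inequalities are strict for $x\notin\{1,2\}$. Applying this with $(a,b)=(p,q)$ gives the left-hand inequalities of Lemma~\ref{lem-2-1}, and with $(a,b)=(q,1)$ the right-hand ones.

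For the Claim it suffices that $r\mapsto\psi_r'(t)$ is strictly increasing on $(0,1]$ for each fixed $t>0$, since then $(\psi_b-\psi_a)'=\psi_b'-\psi_a'>0$. Differentiating the series $\psi_q(t)=-\log(1-q)+(\log q)\sum_{n\ge0}\frac{q^{n+t}}{1-q^{n+t}}$ gives $\psi_q'(t)=(\log q)^2\sum_{n\ge0}\frac{q^{n+t}}{(1-q^{n+t})^2}$. Put $q=e^{-s}$ with $s\ge0$; using $e^{w}-2+e^{-w}=4\sinh^2(w/2)$, the $n$-th summand becomes
\[
\frac{s^2e^{-(n+t)s}}{\bigl(1-e^{-(n+t)s}\bigr)^2}
=\frac{1}{(n+t)^2}\left(\frac{(n+t)s/2}{\sinh\!\bigl((n+t)s/2\bigr)}\right)^{2}.
\]
Now $v\mapsto v/\sinh v$ is strictly decreasing on $(0,\infty)$: its derivative equals $\bigl(\sinh v-v\cosh v\bigr)/\sinh^2 v$, and $\sinh v-v\cosh v$ vanishes at $0$ and has derivative $-v\sinh v<0$. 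Hence each summand is strictly decreasing in $s$, so $\psi'_{e^{-s}}(t)$ is strictly decreasing in $s$, i.e.\ strictly increasing in $q=e^{-s}$ on $(0,1]$ (the value at $s=0$ being $\sum_{n\ge0}(n+t)^{-2}=\psi'(t)$). This is the only genuine computation in the proof.

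The delicate point is purely the bookkeeping that the crossing point $c$ lies strictly between $1$ and $2$: this is precisely what forces $\psi_b-\psi_a$ to be negative throughout $(0,1]$ and positive throughout $[2,\infty)$, which is what the three integral estimates require, and it follows immediately from strict monotonicity of $\psi_b-\psi_a$ together with $\int_1^2(\psi_b-\psi_a)=0$. I do not foresee any obstacle beyond the elementary inequality that $v/\sinh v$ is decreasing.
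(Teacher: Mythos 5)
Your proposal is correct. Note, however, that the paper offers no proof of this lemma at all: it is quoted verbatim from Askey's 1978 paper, so there is no in-text argument to compare against. What you have written is a complete, self-contained derivation, and it is essentially a reconstruction of the classical argument: the function $g(x)=\log\Gamma_b(x)-\log\Gamma_a(x)$ vanishes at $x=1$ and $x=2$ and is convex (your Claim says exactly that $g'=\psi_b-\psi_a$ is increasing), hence $g\le 0$ on $[1,2]$ and $g\ge 0$ outside; you phrase this via the zero-mean/unique-crossing observation rather than via convexity, but it is the same mechanism. The computational core --- that $\psi_q'(t)=(\log q)^2\sum_{n\ge0}q^{n+t}(1-q^{n+t})^{-2}$ is increasing in $q$, proved by rewriting each summand as $(n+t)^{-2}\bigl(v/\sinh v\bigr)^2$ with $v=(n+t)s/2$, $q=e^{-s}$ --- checks out, including the endpoint identification $\lim_{s\to0^+}\psi'_{e^{-s}}(t)=\sum_{n\ge0}(n+t)^{-2}=\psi'(t)$, which cleanly handles the case $b=1$ without invoking $\Gamma_q\to\Gamma$ as a black box. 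Two minor points you could make explicit if this were to be written up: the termwise differentiation of the series for $\psi_q$ (uniform convergence on compacta of $(0,\infty)$), and the fact that strict monotonicity of each summand in $s$ on $(0,\infty)$ with supremum $(n+t)^{-2}$ is what yields the strict inequality $\psi_a'(t)<\psi'(t)$ at the $q=1$ endpoint. Neither is a gap.
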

\begin{theorem}\label{thm-2-4}
Let $0<p<q<1$ and let $n>1$ be an integer. Then
\[
\begin{split}
(a)\quad \fr{(p;p)_{\infty}^n}{(p^{\fr{1}{n}};p^{\fr{1}{n}})_{\infty}} (1-p)^{\fr{n-1}{2}}
& \leq \fr{(q;q)_{\infty}^n}{(q^{\fr{1}{n}};q^{\fr{1}{n}})_{\infty}} (1-q)^{\fr{n-1}{2}}
 \leq \fr{(2\pi)^{\fr{n-1}{2}}}{\sqrt{n}} \\
(b)\quad \fr{(p;p)_{\infty}^n}{(p;p^{\fr{1}{n}})_{\infty}}  \fr{1}{(1-p)^{\fr{n-1}{2}}}
& \geq \fr{(q;q)_{\infty}^n}{(q;q^{\fr{1}{n}})_{\infty}} \fr{1}{(1-q)^{\fr{n-1}{2}}}
 \geq \fr{(n-1)! (2\pi)^{\fr{n-1}{2}}}{n^{n-\fr{1}{2}}}.
\end{split}
\]
\end{theorem}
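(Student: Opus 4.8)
The plan is to reduce both double inequalities to the monotonicity of $\Gamma_q(x)$ in the parameter $q$ recorded in Lemma~\ref{lem-2-1}, after recognizing the $q$-products on the left-hand sides as $\prod_{k=1}^{n-1}\Gamma_q\big(\fr{k}{n}\big)$ and $\prod_{k=1}^{n-1}\Gamma_q\big(1+\fr{k}{n}\big)$ respectively, and the constants on the right-hand sides as the corresponding products for the classical $\Gamma$.

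First I would establish the closed form
\[
\prod_{k=1}^{n-1}\Gamma_q\Big(\fr{k}{n}\Big) = \fr{(q;q)_{\infty}^n}{\big(q^{\fr{1}{n}};q^{\fr{1}{n}}\big)_{\infty}}\,(1-q)^{\fr{n-1}{2}}.
\]
This follows directly from the definition of $\Gamma_q$: writing $\Gamma_q\big(\fr{k}{n}\big) = (q;q)_{\infty}\big(q^{\fr{k}{n}};q\big)_{\infty}^{-1}(1-q)^{1-\fr{k}{n}}$ and multiplying over $k=1,\ldots,n-1$, the exponent of $1-q$ collapses to $\sum_{k=1}^{n-1}\big(1-\fr{k}{n}\big)=\fr{n-1}{2}$, while the base-splitting identity $\prod_{k=1}^{n}\big(q^{\fr{k}{n}};q\big)_{\infty}=\big(q^{\fr{1}{n}};q^{\fr{1}{n}}\big)_{\infty}$ gives $\prod_{k=1}^{n-1}\big(q^{\fr{k}{n}};q\big)_{\infty}=\big(q^{\fr{1}{n}};q^{\fr{1}{n}}\big)_{\infty}\big/(q;q)_{\infty}$; alternatively one may simplify~(\ref{q-Gauss-prod-1}) using $\Gamma_q\big(\fr{1}{2}\big)=(q;q)_{\infty}^2\big(q^{\fr{1}{2}};q^{\fr{1}{2}}\big)_{\infty}^{-1}(1-q)^{\fr{1}{2}}$. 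Then, from~(\ref{basic-fact}) and $\prod_{k=1}^{n-1}(1-q^{\fr{k}{n}})=\big(q^{\fr{1}{n}};q^{\fr{1}{n}}\big)_{n-1}$ together with $\big(q^{\fr{1}{n}};q^{\fr{1}{n}}\big)_{\infty}\big/\big(q^{\fr{1}{n}};q^{\fr{1}{n}}\big)_{n-1}=\big(q;q^{\fr{1}{n}}\big)_{\infty}$, I would deduce
\[
\prod_{k=1}^{n-1}\Gamma_q\Big(1+\fr{k}{n}\Big)=\fr{\big(q^{\fr{1}{n}};q^{\fr{1}{n}}\big)_{n-1}}{(1-q)^{n-1}}\prod_{k=1}^{n-1}\Gamma_q\Big(\fr{k}{n}\Big)=\fr{(q;q)_{\infty}^n}{\big(q;q^{\fr{1}{n}}\big)_{\infty}}\cdot\fr{1}{(1-q)^{\fr{n-1}{2}}}.
\]

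With these identities in hand, the classical Gauss multiplication formula~(\ref{Gauss-prod}) gives $\prod_{k=1}^{n-1}\Gamma\big(\fr{k}{n}\big)=(2\pi)^{\fr{n-1}{2}}/\sqrt{n}$, and hence, using $\Gamma\big(1+\fr{k}{n}\big)=\fr{k}{n}\Gamma\big(\fr{k}{n}\big)$, also $\prod_{k=1}^{n-1}\Gamma\big(1+\fr{k}{n}\big)=(n-1)!\,(2\pi)^{\fr{n-1}{2}}\big/n^{\,n-\fr{1}{2}}$, which are exactly the right-hand constants appearing in (a) and~(b). Since $\fr{k}{n}\in(0,1)$ for $k=1,\ldots,n-1$, Lemma~\ref{lem-2-1}(a) yields $\Gamma_p\big(\fr{k}{n}\big)\leq\Gamma_q\big(\fr{k}{n}\big)\leq\Gamma\big(\fr{k}{n}\big)$ whenever $0<p<q<1$; multiplying over $k$ and substituting the closed forms gives~(a). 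Likewise $1+\fr{k}{n}\in(1,2)$, so Lemma~\ref{lem-2-1}(b) yields $\Gamma_p\big(1+\fr{k}{n}\big)\geq\Gamma_q\big(1+\fr{k}{n}\big)\geq\Gamma\big(1+\fr{k}{n}\big)$; multiplying over $k$ gives~(b).

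The only genuinely technical point is the bookkeeping of the infinite $q$-products, namely the base-splitting identities $\prod_{k=1}^{n}\big(q^{\fr{k}{n}};q\big)_{\infty}=\big(q^{\fr{1}{n}};q^{\fr{1}{n}}\big)_{\infty}$ and $\big(q^{\fr{1}{n}};q^{\fr{1}{n}}\big)_{\infty}=\big(q^{\fr{1}{n}};q^{\fr{1}{n}}\big)_{n-1}\big(q;q^{\fr{1}{n}}\big)_{\infty}$; both are immediate on comparing the multisets of exponents of $q^{\fr{1}{n}}$ occurring on the two sides. Once the two closed forms are in place the monotonicity argument is routine, so I do not anticipate any real obstacle.
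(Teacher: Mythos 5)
Your proposal is correct and follows essentially the same route as the paper: both reduce the claimed products to $\prod_{k=1}^{n-1}\Gamma_q\big(\fr{k}{n}\big)$ and $\prod_{k=1}^{n-1}\Gamma_q\big(1+\fr{k}{n}\big)$ via the closed form $\prod_{k=1}^{n-1}\Gamma_q\big(\fr{k}{n}\big)=\fr{(q;q)_{\infty}^n}{(q^{1/n};q^{1/n})_{\infty}}(1-q)^{(n-1)/2}$ (the paper obtains it by simplifying~(\ref{q-Gauss-prod-1}), which you note as an alternative to your direct computation from the definition), and then apply Askey's monotonicity Lemma~\ref{lem-2-1} together with the Gauss multiplication formula~(\ref{Gauss-prod}).
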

\begin{proof}
Note first that
\begin{equation*}
\Big(\Gamma_q\big(\fr{1}{2}\big) \Big)^{n-1} (q^{\fr{1}{2}};q^{\fr{1}{2}})_{\infty}^{n-1} =
(1-q)^{\fr{n-1}{2}} (q;q)_{\infty}^{2n-2},
\end{equation*}
and therefore the relation (\ref{q-Gauss-prod-1}) boils down to
\begin{equation}\label{q-Gauss-prod-2}
\prod_{k=1}^{n-1}\Gamma_q\left(\fr{k}{n}\right) =
\fr{(q;q)_{\infty}^n}{(q^{\fr{1}{n}};q^{\fr{1}{n}})_{\infty}}(1-q)^{\fr{n-1}{2}}.
\end{equation}
(a)\ Let  $x_i = \fr{i}{n}$ for $i=1,\ldots, n-1$ and apply Lemma~\ref{lem-2-1}(a)  to obtain
\[
\prod_{i=1}^{n-1} \Gamma_p\big(\fr{i}{n}\big) \leq \prod_{i=1}^{n-1} \Gamma_q\big(\fr{i}{n}\big) \leq
\prod_{i=1}^{n-1} \Gamma\big(\fr{i}{n}\big) .
\]
Now use (\ref{q-Gauss-prod-2}) and (\ref{Gauss-prod}) to deduce
\[
\fr{(p;p)_{\infty}^n}{(p^{\fr{1}{n}};p^{\fr{1}{n}})_{\infty}} (1-p)^{\fr{n-1}{2}}
\leq \fr{(q;q)_{\infty}^n}{(q^{\fr{1}{n}};q^{\fr{1}{n}})_{\infty}} (1-q)^{\fr{n-1}{2}}
\leq \fr{(2\pi)^{{\fr{n-1}{2}}}}{\sqrt{n}},
\]
which is the desired inequalities.

\noindent
(b)\ As to this part let $x_i = 1+\fr{i}{n}$ for $i=1,\ldots, n-1$ and apply Lemma~\ref{lem-2-1} (b) to get
\[
\prod_{i=1}^{n-1} \Gamma_p\big(1+\fr{i}{n}\big) \geq \prod_{i=1}^{n-1} \Gamma_q\big(1+\fr{i}{n}\big) \geq
\prod_{i=1}^{n-1} \Gamma\big(1+ \fr{i}{n}\big) .
\]
It follows by combining these inequalities with the basic facts
$\Gamma_q(x+1) = \fr{1-q^x}{1-q} \Gamma_q (x)$ and $\Gamma(x+1) = x\Gamma(x)$
that
\[
\prod_{i=1}^{n-1} \Gamma_p\big(\fr{i}{n}\big)\fr{1-p^{\fr{i}{n}}}{1-p} \geq
\prod_{i=1}^{n-1} \Gamma_q\big(\fr{i}{n}\big)\fr{1-q^{\fr{i}{n}}}{1-q} \geq
\prod_{i=1}^{n-1} \Gamma\big(\fr{i}{n}\big) \fr{i}{n} ,
\]
or equivalently,
\[
\fr{(p^{\fr{1}{n}};p^{\fr{1}{n}})_{n-1}}{(1-p)^{n-1}}\fr{(p;p)_{\infty}^n}
{(p^{\fr{1}{n}};p^{\fr{1}{n}})_{\infty}} (1-p)^{\fr{n-1}{2}}
\geq
\fr{(q^{\fr{1}{n}};p^{\fr{1}{n}})_{n-1}}{(1-q)^{n-1}}\fr{(q;q)_{\infty}^n}
{(q^{\fr{1}{n}};q^{\fr{1}{n}})_{\infty}} (1-q)^{\fr{n-1}{2}}
\geq
\fr{(n-1)!}{n^{n-1}}  \prod_{i=1}^{n-1} \Gamma\big(\fr{i}{n}\big).
\]
Finally, an application of (\ref{q-Gauss-prod-2}) and (\ref{Gauss-prod}) to the foregoing inequalities
and simplifying yield
\[
\fr{(p;p)_{\infty}^n}{(p;p^{\fr{1}{n}})_{\infty}}\fr{1}{(1-p)^{\fr{n-1}{2}}} \geq
\fr{(q;q)_{\infty}^n}{(q;q^{\fr{1}{n}})_{\infty}}\fr{1}{(1-q)^{\fr{n-1}{2}}} \geq
\fr{(n-1)! (2\pi)^{\fr{n-1}{2}}}{n^{n-\fr{1}{2}}}.
\]
This completes the proof.
\end{proof}
\end{document}